\documentclass[11pt,a4paper,reqno]{amsart}
\usepackage{amsfonts}
\usepackage{amsmath,amssymb,amsthm,amsxtra,mathrsfs}
\usepackage{float}
\usepackage{bbm}
\usepackage[colorlinks, linkcolor=blue, anchorcolor=Periwinkle, citecolor=blue, urlcolor=Emerald]{hyperref}
\usepackage[dvipsnames]{xcolor}
\usepackage{geometry,array} 
\usepackage{tikz}\usetikzlibrary{matrix}
\usepackage{url}
\usepackage{dsfont}
\usepackage{tablists} \restorelistitem
\usepackage{extarrows}
\usepackage[all]{xy}
\usepackage{setspace}
\usepackage{hyperref}
\usepackage{tikz-cd}
\usepackage{amscd}
\usepackage{multirow}
\usepackage{longtable}
\usepackage{tabularx}
\usepackage{microtype}
\usepackage{stackengine}

 \usepackage{listings}
 \usepackage{xcolor}
\newcommand{\rank}{\operatorname{rank}}
\newcommand{\im}{\operatorname{im}}

\usepackage{threeparttable}
\usepackage{caption}
\usepackage{adjustbox}

\usetikzlibrary{decorations.markings}
\tikzset{->-/.style={decoration={  markings,  mark=at position #1 with
			{\arrow{>}}},postaction={decorate}}}
\tikzset{-<-/.style={decoration={  markings,  mark=at position #1 with
			{\arrow{<}}},postaction={decorate}}}

\newcommand{\id}{\operatorname{id}}

\newcommand{\Hom}{\operatorname{Hom}}

\renewcommand{\dim}{\operatorname{dim}}

\def\deg{\operatorname {deg}}
\def\GrM{\operatorname {GrMod}}
\def\grmod{\operatorname {grmod}}
\def\Aut{\operatorname{Aut}}
\def\gld{\operatorname{gldim}}

\def\tors{\operatorname{tors}}

\def\qgr{\operatorname{qgr}}
\def\Ext{\operatorname {Ext}}

{\tiny }
\def\diag{\operatorname{diag}}
\def\qgr{\operatorname{qgr}}
\def\supp{\operatorname{supp}}

\newcommand{\mcZ}{\mathcal{Z}}

\newcommand{\mbN}{\mathbb{N}}

\newcommand{\mbP}{\mathbb{P}}

\newcommand{\mbZ}{\mathbb{Z}}

\theoremstyle{plain}
\newtheorem{theorem}{Theorem}[section]

\newtheorem{lemma}[theorem]{Lemma}
\newtheorem{corollary}[theorem]{Corollary}
\newtheorem{proposition}[theorem]{Proposition}

\theoremstyle{definition}
\newtheorem{definition}[theorem]{Definition}

\newtheorem{example}[theorem]{Example}

\newtheorem{remark}[theorem]{Remark}

\newtheorem{question}[theorem]{Question}

\numberwithin{equation}{section}
\newtheorem{definition-proposition}[theorem]{Definition-Proposition}

\begin{document}

\title{Point varieties and point-exactness of Koszul algebras}

\author{Haigang Hu}
\address{Hu: School of Mathematical Sciences, University of Science and Technology of China, Hefei, Anhui 230026, China}
\email{huhaigang@ustc.edu.cn}
\author{Wenchao Wu}
\address{Wu: School of Mathematical Sciences, University of Science and Technology of China, Hefei, Anhui 230026, China}
\email{wuwch20@mail.ustc.edu.cn}
\author{Yu Ye}
\address{Ye: School of Mathematical Sciences, University of Science and Technology of China, Hefei, Anhui 230026, China  
\newline
and 
\newline
Hefei National Laboratory, University of Science and Technology of China, Hefei 230088, China}
\email{yeyu@ustc.edu.cn}

\keywords{Koszul algebra, point variety, point-exact condition}

\thanks{{\it 2020 MSC}: 16E65, 16S38, 16W50}

\begin{abstract}
In this paper, we introduce the point-exact condition for a Koszul algebra $A$, which is useful for characterizing the (G1) condition of $A$ in the sense of Mori. Let $B = A/(f)$, where $f \in A_2$ is a regular normal element. We show that if $A$ satisfies the (G1) condition and is point-exact up to degree $\ell \geq 2$, then $B$ also satisfies the (G1) condition and is point-exact up to degree $\ell$. Moreover, we show that skew polynomial algebras satisfy the point-exact condition.
\end{abstract}

\maketitle

\tableofcontents

\section{Introduction}

In noncommutative algebraic geometry, the study and classification of quantum polynomial algebras and noncommutative quadric hypersurfaces are two major projects.
In the foundational work of Artin, Tate, and Van den Bergh \cite{ATV90}, they introduce {\it point varieties} for connected graded algebras, which are important geometric invariants for $3$-dimensional quantum polynomial algebras $A$. They prove that $A$ is {\it nondegenerate} (or equivalently $A$ satisfies the (G1) condition; see Definition \ref{defn-g1}), meaning that its point variety coincides with the graph of an automorphism of a projective variety $E \subset \mathbb{P}^2$, and $E$ is either the projective plane $\mathbb{P}^2$ or a cubic curve in $\mathbb{P}^2$. This result makes the classification of $3$-dimensional quantum polynomial algebras remarkably elegant and has greatly stimulated the development of this research field. In particular, the study of point varieties of higher-dimensional quantum polynomials remains an active area of research \cite{BDL,HM24,SM}.

Let $A$ be an $n$-dimensional quantum polynomial algebra, and $f_1, \dots, f_r \in A_2$ a regular normal sequence.
The quotient algebra $B = A/(f_1, \dots, f_r)$ is said to be a {\it noncommutative quadratic complete intersection (embedded into a quantum $\mathbb{P}^{n-1}$)}.  In particular, when $r = 1$, $B$ is called a noncommutative quadric hypersurface.  
Recent works show that the point varieties and the (G1) condition are also very useful in the study of Calabi-Yau conics (i.e., $A$ is a $3$-dimensional Calabi-Yau quantum polynomial algebra and $r = 1$) \cite{HMM23}, and Clifford quadratic complete intersections (i.e., $A$ is a Clifford quantum polynomial algebra) \cite{HM24}. 

Note that all the algebras mentioned above are Koszul algebras. Therefore, in this paper we aim to answer a very natural question.

\begin{question} \label{que-intr}
Let $A$ be a Koszul algebra, $f \in A_2$ a regular normal element, and $B = A/(f)$. 
If $A$ satisfies the (G1) condition, does it imply that $B$ also satisfies the (G1) condition?
\end{question}

To address this question, we introduce the concept of point-exactness for Koszul algebras $A$ (see Definition \ref{def-pe}). Following \cite{ATV90}, we show that $A$ satisfies the (G1) condition if and only if $A$ is semi‑standard and point‑exact at degree $1$ (see Lemma \ref{lem-E1}). The crucial step is then the explicit construction of a linear resolution of $\Bbbk_B$ from a linear resolution of $\Bbbk_A$. In fact, we consider a more general setting in Section \ref{sec-resquo}, as stated in the following theorem.

\begin{theorem}[Theorem \ref{T1}] \label{thm.introt1}
Let $A$ be a connected graded algebra, $f \in A_m$ a regular normal element with $m \geq 1$, and $B = A/(f)$. Let $P_\bullet \xrightarrow{\sim} \Bbbk_A$ be a minimal resolution.
The sequence $T_\bullet$ defined in (\ref{miniB}), which is constructed from $P_\bullet$, is a free resolution of $\Bbbk_B$.
If in addition $A$ is Koszul and $m \geq 2$, then $T_\bullet \xrightarrow{\sim} \Bbbk_B$ is a minimal  resolution. 
\end{theorem}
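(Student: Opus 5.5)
The construction (\ref{miniB}) is not visible at this point. I will assume it is the standard Shamash/Eisenbud-type construction for a hypersurface defined by a normal element. Set $\bar P_i = P_i \otimes_A B = P_i/fP_i$, and put
\[
T_n=\bigoplus_{j\ge 0}\bar P_{n-2j}(-jm),
\]
with differential $\bar d + \sum_{j\ge1}\bar h_j$. Here $\bar d$ is the reduction of the differential of $P_\bullet$, and $h_1\colon P_i\to P_{i+1}(-m)$ is a null-homotopy of multiplication by $f$, twisted by the normalizing automorphism $\sigma$ of $f$. Multiplication by a normal element is not $A$-linear, so we use the graded automorphism $\sigma$ with $fa=\sigma(a)f$. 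Left multiplication by $f$ then gives a morphism of complexes $P_\bullet\to {}^{\sigma}P_\bullet(m)$ lifting the zero map on $\Bbbk$, because $f$ kills $\Bbbk$ when $m\ge1$. Hence it is null-homotopic, via some $h_1$.

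\textbf{Step 1: the higher homotopies.} Inductively construct $h_j$ with $dh_1+h_1d = f\cdot$ and $\sum_{a+b=j}h_ah_b=0$ for $j\ge2$, with $h_0=d$, all modulo the appropriate $\sigma$-twists. Each new $h_j$ exists because the obstruction is a chain map $P_\bullet\to$ (twisted shift of $P_\bullet$) inducing zero on homology, and $P_\bullet$ is a free resolution. After reducing modulo $f$, these identities say exactly that the differential $D$ of $T_\bullet$ squares to zero. If (\ref{miniB}) already fixes the $h_j$, this step is only a check.

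\textbf{Step 2: exactness.} This is the main obstacle. I would filter $T_\bullet$ by $j$. The associated graded pieces are the complexes $\bar P_\bullet(-jm)$, whose homology is $\operatorname{Tor}^A_\ast(\Bbbk,B)$. Since $f$ is regular, $0\to A(-m)\xrightarrow{f}A\to B\to0$ is a resolution of $B$, so $\operatorname{Tor}^A_0(\Bbbk,B)=\Bbbk$, $\operatorname{Tor}^A_1(\Bbbk,B)=\Bbbk(-m)$, and all higher Tor vanish. In the spectral sequence, $h_1$ induces a map $\operatorname{Tor}_1(-jm)\to\operatorname{Tor}_0(-(j+1)m)$. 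I would show this map is an isomorphism $\Bbbk(-(j+1)m)\to\Bbbk(-(j+1)m)$: the class in $\operatorname{Tor}_1$ is represented by $f$ viewed in $\bar P_1$, and $h_1$ sends it to the unit in $\bar P_0$. This uses that $h_1$ contracts multiplication by $f$ in degree $0$. With that isomorphism, the $E^2$ page collapses to $\Bbbk$ in position $(0,0)$, so $T_\bullet\to\Bbbk_B$ is a resolution. Convergence is not a problem because the filtration is finite in each homological degree. As an alternative, one could prove exactness directly by induction on $n$, lifting cycles to $P_\bullet$ and using the regularity of $f$.

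\textbf{Step 3: minimality.} Minimality means $D(T_\bullet)\subseteq B_{+}T_\bullet$. The map $\bar d$ has entries in $B_+$ because $P_\bullet$ is minimal. If $A$ is Koszul, then $P_i$ is generated in degree $i$. The component $h_j\colon P_i\to P_{i+2j-1}(-jm)$ therefore has entries of degree $jm+i-(i+2j-1)=j(m-2)+1\ge1$ when $m\ge2$. So every component of $D$ has positive-degree entries, and $T_\bullet$ is minimal. For $m=1$ and $j$ large this degree can be $\le0$, which is why the hypotheses $m\ge2$ and Koszul are needed.
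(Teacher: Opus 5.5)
Your route to exactness differs genuinely from the paper's, and it works once one slip in Step 2 is fixed: you have the key map running backwards. In your filtration the differential lowers the level, since $h_k$ sends level $j$ to level $j-k$. (Also $h_1\colon P_i(-m)\to P_{i+1}$, not $P_i\to P_{i+1}(-m)$.) So $d^1$ runs from the $\operatorname{Tor}_0$ class at level $j+1$ (total degree $2j+2$) to the $\operatorname{Tor}_1$ class at level $j$ (total degree $2j+1$), not from $\operatorname{Tor}_1$ to $\operatorname{Tor}_0$. It sends $[1]\in\bar P_0$ to the class of $(\tau^j\cdot c^1_0)(1)\in\bar P_1$. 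Since $d_1c^1_0=\zeta^1_0=f$ and $\tau(f)=f$ (regularity gives $\sigma(f)=f$), the connecting map of $0\to P_\bullet f\to P_\bullet\to\bar P_\bullet\to 0$ sends this class to the generator of $\Bbbk(-m)$. Hence it spans $\operatorname{Tor}_1^A(\Bbbk,B)$ and $d^1$ is an isomorphism. There is no class ``$f$ viewed in $\bar P_1$''; the $\operatorname{Tor}_1$ representative is $h_1(1)$. You should also say that the level-$j$ graded piece is $\tau^jP_\bullet\otimes_AB$, which still computes $\operatorname{Tor}^A_\ast(\Bbbk,B)$ by Lemma \ref{lem-tplin}. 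With these fixes $E^2=\Bbbk$ in total degree $0$ and the argument is complete. Steps 1 and 3 match the paper: Lemmas \ref{R0} and \ref{R1}, and the same count $j(m-2)+1\ge 1$.

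The paper instead proves exactness by an explicit element chase. For a cycle $\vartheta\in T_i$ it removes the top component $v_l$ of the support by subtracting a boundary. When $v_l\in P_k$ with $k\ge 1$ (Case 1), it uses regularity of $f$ and exactness of $\tau^lP_\bullet$. When $v_l\in P_0$ (Case 2), it uses $d_1c^1_0=f$ plus regularity to get $v_l\in\operatorname{im}(\tau^l\cdot d_1)$. The chase is elementary and constructive. Your spectral sequence explains why everything cancels and handles both halves of the $\operatorname{Tor}_0$--$\operatorname{Tor}_1$ pairing at once. The paper's Case 2 is exactly injectivity of your $d^1$. Surjectivity is what handles a cycle whose top component lies in $P_1$ and represents the nonzero $\operatorname{Tor}_1$ class. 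For example, take $A=\Bbbk[x]$, $f=x^2$ and $\vartheta=\bar x\in T_1$: this cycle equals $\partial_2(1\otimes e_2)$ and cannot be killed at its own level. The paper's Case 1 with $i-2l=1$ tacitly invokes exactness of $\tau^lP_\bullet$ at $P_0$, which fails. On that point your corrected formulation is the more careful one.
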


Let $A$ and $B$ be as in Question \ref{que-intr}. Assuming that $A$ is point‑exact up to degree $\ell \geq 2$ and using the linear resolution of $\Bbbk_B$ from Theorem \ref{thm.introt1}, we answer Question \ref{que-intr} affirmatively.

\begin{theorem}[Theorem \ref{thm-main}] \label{thm.introt2}
Keep notations as in Question \ref{que-intr}.
If $A$ satisfies the (G1) condition and is point‑exact up to degree $\ell \geq 2$, then $B$ also satisfies the (G1) condition and is point‑exact up to degree $\ell$.
\end{theorem}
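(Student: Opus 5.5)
We first recall the notions, since the definitions of point-exactness and semi-standardness come only later in the paper (Definition \ref{def-pe}, Lemma \ref{lem-E1}). Let $V = A_1 = B_1$, and let $R_A \subset V\otimes V$ be the quadratic relations of $A$. Because $f\in A_2$, the relations of $B$ are $R_B = R_A + \Bbbk \tilde f$, where $\tilde f \in V\otimes V$ lifts $f$. The Koszul resolution $P_\bullet$ of $\Bbbk_A$ has $P_i = A\otimes K_i$ with $K_i=\bigcap_{j} V^{\otimes j}\otimes R_A\otimes V^{\otimes i-2-j}$. For each $i$ we form the truncated point schemes $\Gamma_i\subset (\mathbb{P}^{*})^{ i}$, cut out by the multilinearizations of $K_i$. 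Point-exactness at degree $i$ should mean that a certain linear complex, built by evaluating the differentials of $P_\bullet$ at points of the point scheme, is exact. In the ATV style, the rank conditions on the evaluated matrices $M_i(p)$ should be as large as possible. By Lemma \ref{lem-E1}, (G1) is equivalent to being semi-standard together with point-exactness at degree $1$. So the plan is to transfer semi-standardness and point-exactness at each degree $1\le i\le \ell$ from $A$ to $B$, using the explicit minimal linear resolution $T_\bullet$ of $\Bbbk_B$ given by Theorem \ref{T1}.

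\textbf{Step 1: describe $T_\bullet$.} Since $A$ is Koszul and $m=2$, Theorem \ref{T1} makes $T_\bullet$ minimal and linear. We expect (\ref{miniB}) to be of the shape
\[
T_i=\bigoplus_{j\ge 0} B\otimes_A P_{i-2j}(-2j)\cong \bigoplus_j B\otimes K_{i-2j}.
\]
Its differential combines $d^P\otimes B$ with "multiplication by $f$" components, which come from a null-homotopy $s$ of $f\cdot$ on $P_\bullet$. Because the resolution is linear, only the terms with $j\le 1$ contribute in low homological degree, and these give linear matrix entries. We then write the matrices $N_i$ of $T_\bullet$ in block form: the block from $P$, plus a block coming from the lift of $f$.

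\textbf{Step 2: semi-standard / point scheme of $B$.} The point scheme of $B$ is $\Gamma^B=\Gamma^A\cap \mathcal V(\tilde f)$, where $\mathcal V(\tilde f)$ is the bilinear zero locus of $\tilde f$. Normality of $f$, say $fa=\sigma(a)f$, shows that this intersection is stable under the projections and under the automorphism. Together with the (G1) data for $A$ (the graph of $\sigma_A$ on $E_A$), this gives $\Gamma^B$ as the graph of the restriction of $\sigma_A$ to $E_B=\{p\in E_A: \tilde f(p,\sigma_A p)=0\}$. This yields semi-standardness for $B$.

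\textbf{Step 3: point-exactness of $B$ at degrees $i\le \ell$.} This is the main obstacle. We evaluate the block matrices of $T_\bullet$ at a point $p$ of $E_B$. The $P$-blocks are exact there by point-exactness of $A$ at degrees $i$ and $i-1$ (this is why degree $\ell\ge2$ is needed: the $f$-block shifts homological degree by $2$). We would filter the evaluated complex by $j$ and run the associated spectral sequence, or equivalently a long exact sequence of a mapping cone. The $E_1$ page is the evaluated $A$-complexes, which are exact in the required range. The remaining point is that the connecting maps induced by the homotopy $s$ at $p$ must not destroy exactness. Here the vanishing $\tilde f(p,\sigma p)=0$ is exactly what is needed for the cone terms to be compatible. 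First we would check the extreme degree $i=1,2$ by direct rank computation. Then we would do induction on $i$, using that the cone of $f\cdot$ evaluated at a point on $E_B$ splits.
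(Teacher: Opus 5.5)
Your overall plan can be made to work, and your Step 2 is genuinely different from (and stronger than) the paper's. As written, though, two steps are asserted rather than proved, and in Step 3 you place the difficulty in the wrong spot.

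\emph{Step 2.} The graph of $\sigma_A$ restricted to $E_B:=\{p\in E_A:\tilde f(p,\sigma_Ap)=0\}$ is (G1) data only if $\sigma_A(E_B)=E_B$. That is the whole content of the claim, so ``normality shows it is stable'' needs an argument. One exists. Since $A_1f=fA_1$, for each $b\in V$ there is $a\in V$ with $bf=fa$ in $A$, and conversely for each $a$ there is such a $b$. Then $b\otimes\tilde f-\tilde f\otimes a\in R\otimes V+V\otimes R$. For $p\in E_A$ the triple $(p,\sigma_Ap,\sigma_A^2p)$ lies in the zero locus of $R\otimes V+V\otimes R$, so evaluating gives
\[
p(b)\,\tilde f(\sigma_Ap,\sigma_A^2p)=\tilde f(p,\sigma_Ap)\,(\sigma_A^2p)(a).
\]
Choosing $b$ with $p(b)\neq0$, resp.\ $a$ with $(\sigma_A^2p)(a)\neq0$, yields $p\in E_B\iff\sigma_Ap\in E_B$. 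The paper instead gets (G1) for $B$ (Proposition \ref{prop-leq2}) from a rank computation on the left resolution $K_\bullet$ that uses point-exactness of $A$ at degree $2$. That is the only place $\ell\ge2$ enters the paper's proof. Your route needs only (G1) for $A$ and normality of $f$, so in your version $\ell\ge2$ is not used at all. Your stated reason for needing $\ell\ge 2$ (``the $f$-block shifts degree by $2$'') is therefore not the right one.

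\emph{Step 3.} Filtering is exactly the right idea, and it already finishes the proof. There is no ``remaining point'' about connecting maps, and the induction via ``the cone splits'' is unsupported and unnecessary. What you do need to write down:

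(i) For $p\in E_B$, the matrices $(\partial_i)_{\sigma_B^{i-1}p}$ form a complex. The entries of $\partial_{i-1}\partial_i$ lie in $R_B$, and $(\sigma_B^{i-2}p,\sigma_B^{i-1}p)\in\mathcal V(R_B)$ because $E_B$ is $\sigma_B$-stable. This is where Step 2 is used.

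(ii) $\partial_i$ is block upper triangular with diagonal blocks $\tau^k\cdot d_{i-2k}$. All $c^k$ are linear and occur, not only those with $k\le1$. Hence $F^K=\bigoplus_{k\le K}$ is a filtration by subcomplexes. Its $K$-th graded piece is the $\tau^K$-twisted $A$-complex evaluated at points of $E_B\subset E_A=X_A$.

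(iii) $\rank(\tau^k\cdot d_j)_q=\rank(d_j)_q$ by the Remark opening Section \ref{sec-mainres}. So by Lemma \ref{lem-rkdi}, each graded piece is exact at $A$-degree $i-2k\in[0,\ell]$ whenever $1\le i\le\ell$.

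(iv) A finitely filtered complex whose graded pieces are all exact in degree $i$ is exact in degree $i$. Induct along $0\to F^{K-1}\to F^K\to\mathrm{gr}^K\to0$; no spectral-sequence differential needs to be controlled.

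(v) Convert back to the paper's pointwise rank definition via Lemma \ref{lem-rkdi}, using that $\sigma_B$ permutes $E_B=X_B$.

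This is the homological form of the paper's own argument. Its lower bound $\rank(\partial_i)_p\ge\sum_k\rank(\tau^k\cdot d_{i-2k})_p$ is the block-triangular rank inequality, i.e.\ (ii)--(iii). Its upper bound from $(\partial_{i-1})_{\sigma_B^{-1}p}(\partial_i)_p=0$ together with Lemma \ref{lem-rk} plays the role of (i) and (iv).
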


Furthermore, we show that the following algebras satisfy the point‑exact condition (i.e., they are point‑exact at every degree $\geq 1$):
\begin{itemize}
\item[(1)] $2$ and $3$-dimensional quantum polynomial algebras (Proposition \ref{prop.34qec}); 
\item[(2)] $4$-dimensional quantum polynomial algebras satisfying the (G1) condition (Proposition \ref{prop.34qec}); 
\item[(3)] skew polynomial algebras of dimension $\geq 2$ (Definition \ref{defn-spa}, Theorem \ref{thm-spa}).
\end{itemize}
Thus by Theorem \ref{thm.introt2} and Remark \ref{rem-asg}, if $B$ is a noncommutative quadratic complete intersection embedded into a quantum $\mathbb{P}^{n-1}$ that is defined by one of the above algebras, then $B$ satisfies the (G1) condition and point-exact condition. In particular, the polynomial algebra $\Bbbk[x_1, \dots, x_n]$ is a special case of skew polynomial algebras of dimension $n$, and hence quadratic complete intersections $\Bbbk[x_1,\dots,x_n]/(f_1, \dots, f_r)$ satisfy the point-exact condition. 

\subsection*{Acknowledgement.}
H. Hu was supported by the National Natural Science Foundation of China (Nos.\,12501054 and 12371042). 
Y. Ye was partially supported by the National Key R\&D Program of China
(No.\,2024YFA1013802), the National Natural Science Foundation of China (Nos.\,12131015 and 12371042), and the Quantum Science and Technology-National Science and Technology Major Project (No.\,2021ZD0302902).

\section{Preliminaries}\label{sec2}
Throughout this paper, we work over an algebraically closed field $\Bbbk$ of characteristic $0$. All algebras and vector spaces considered are over $\Bbbk$ and all graded algebras are $\mathbb{Z}$-graded.   

Let $A$ be a  graded algebra. We denote by $\GrM A$ the category of right graded  $A$-modules with degree preserving right $A$-module homomorphisms, and by $\grmod A$ the full subcategory consisting of finitely generated modules. Morphisms in $\GrM A$ are composed from right to left, that is the composite of $f\colon  X \to Y$ and $g\colon Y \to Z$ is denoted by $gf = g\circ f\colon X \to Z$. For $M\in \GrM A$ and $n\in \mbZ$, we define  the shift  $M(n)$ by the new grading $M(n) = M_{n+i}$. For $M,N\in \GrM A$ and $i\in \mbN$, we define
 \[ 
 \underline{\Ext}_A^i(M,N) := \bigoplus_{n\in \mbZ}{\Ext}^i_{\GrM A}(M,N(n)). 
 \]  
 
For a graded algebra $A = \bigoplus_{i \in \mathbb{Z}} A_{i}$, we say that $A$ is \textit{connected graded} if  $A_{0} = \Bbbk$ and $A_{i} = 0$ for all $i < 0$, and $A$ is \textit{locally finite} if $\dim_{\Bbbk} A_{i} < \infty$ for all $i \in \mathbb{Z}$. 
If $A$ is a locally finite graded algebra and $M \in \mathrm{grmod}\,A$, we define the \textit{Hilbert series} of $M$ by $H_{M}(t) := \sum_{i \in \mathbb{Z}} (\dim_{\Bbbk} M_{i})t^{i} \in \mathbb{Z}[[t, t^{-1}]]$.

Let $\sigma$ be a graded automorphism of a connected graded algebra $A$. 
Then $\sigma$ induces an autoequivalence of $\GrM A$: For $M \in \GrM A$, $M^\sigma$ equals  $M$ as vector spaces and is endowed with a new right $A$-action $*$ defined by  $m * a = m \sigma(a)$, for all $m \in M^\sigma$, $a \in A$. 
For a morphism $\varphi: M \to N$ in $\GrM A$. The morphism $\varphi^\sigma: M^\sigma \to N^\sigma$ is defined by $\varphi^\sigma(m) = \varphi(m)$ for all $m \in M^\sigma$. 
The equivalence $(-)^\sigma$ commutes with the grading shift functor. 

Let $A$ be a connected graded algebra, and $F_0, F_1$ free objects in $\GrM A$. We often view an element $x \in F_0$ as a column vector over $A$, and a morphism 
$\varphi: F_0 \to F_1$ in $\GrM A$ as a matrix over $A$, i.e., $\varphi = (\varphi_{ij}) \in \mathbb{M}_{s_1 \times s_0}(A)$, where $\rank_A F_i = s_i$ for $i = 0,1$. 
Moreover, for a graded automorphism $\sigma$ of $A$, we define $\sigma \cdot \varphi : = (\sigma (\varphi_{ij})) \in \mathbb{M}_{s_1 \times s_0}(A)$.

\begin{definition}Let $ A $ be a graded algebra and $ f \in A_m $.
	\begin{enumerate}
		\item[(1)] We say that $ f $ is \emph{regular} if, for every $ a \in A $, $ af = 0 $ or $ fa = 0 $ implies $ a = 0 $.
		\item[(2)] We say that $ f $ is \emph{normal} if $ Af = fA $.
	\end{enumerate}
\end{definition}

We remark that $f$ is regular and normal if and only if there exists a unique (graded) ring automorphism $ \sigma$ of $A$ such that $ fa = \sigma(a)f $ for $ a \in A$. We call $ \sigma$ the \emph{normalizing automorphism} of $ f $.

\subsection{Matrices over a vector space}
Let $V$ be an $n$-dimensional vector space with a basis $\{ x_1, \dots, x_n\}$. 
Let $M = (m_{ij}) \in \mathbb{M}_{s \times r} (V)$. We regard every minor of $M$ as a (homogeneous) polynomial in $\Bbbk[x_1,\dots,x_n]$.

Let $t \in \mathbb{N}$, we define $X^t_M$ to be the projective variety
$$
X^t_M : = \mathcal{Z} (\{t\text{-minors of } M\}) \subset \mathbb{P}(V^*). 
$$
Note that $X^t_M =  \mathbb{P}(V^*)$ if $t > \min\{s,r\}$. For convenience, we will omit the superscript $t$ in $X^t_M$ if $t = n$. 

For $p = (p_1 \mathpunct{:} \, \cdots \, \mathpunct{:} p_n) \in \mathbb{P}(V^*)$, define $M_p$ to be the matrix over $\Bbbk$ by replacing $x_i$ by $p_i$ for $i = 1, \dots,n$. Then $M_p$ is well-defined up to scalar. 
We are also able to define $\rank_\Bbbk M_p$ in the usual sense,
and clearly, $\rank_\Bbbk M_p < t$ if and only if $p \in X^t_M$.  

The following is an easy application of rank–nullity theorem. 

\begin{lemma} \label{lem-rk}
Let $M \in \mathbb{M}_{s \times r} (V)$, $N \in \mathbb{M}_{r \times l} (V)$, and $p, q \in \mathbb{P}(V^*)$ such that $M_p N_q = 0$. 
\begin{itemize}
\item[(1)] If $\rank_\Bbbk M_p \geq t$ for some $t \leq \min \{s, r\}$, then $\rank_\Bbbk N_{q} \leq r - t$, so that $q \in X^{r-t+1}_{N}$. 
\item[(2)] If $\rank_\Bbbk N_q \geq t$ for some $t \leq \min \{r, l\}$, then $\rank_\Bbbk M_{p} \leq r - t$, so that $p \in X^{r-t+1}_{M}$. 
\end{itemize}
\end{lemma}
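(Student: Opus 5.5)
The plan is to evaluate everything at the points and reduce to linear algebra over $\Bbbk$. Put $M_p \in \mathbb{M}_{s\times r}(\Bbbk)$ and $N_q \in \mathbb{M}_{r\times l}(\Bbbk)$. These are ordinary matrices, each fixed up to a nonzero scalar. Rescaling does not affect ranks or the vanishing of the product, so the hypothesis $M_pN_q=0$ and the rank conditions are well defined.

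For (1), the relation $M_pN_q=0$ says that every column of $N_q$ lies in $\ker M_p \subseteq \Bbbk^r$. Hence the column space of $N_q$ is contained in $\ker M_p$, so $\rank_\Bbbk N_q \le \dim \ker M_p$. By rank–nullity, $\dim\ker M_p = r - \rank_\Bbbk M_p \le r-t$. This gives $\rank_\Bbbk N_q \le r-t < r-t+1$. By the remark just before the lemma ($\rank_\Bbbk N_q < t'$ if and only if $q\in X^{t'}_N$), we conclude that $q \in X^{r-t+1}_N$. If $r-t+1$ exceeds $\min\{r,l\}$, the variety is all of $\mathbb{P}(V^*)$ and the claim is trivial, so there is no conflict with that convention.

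For (2), apply the same argument to the transposes. The identity $M_pN_q=0$ gives $N_q^{T}M_p^{T}=0$, so the columns of $M_p^T$ lie in $\ker N_q^T$. Rank–nullity then gives $\rank_\Bbbk M_p \le r - \rank_\Bbbk N_q \le r-t$. Equivalently, the image of $N_q$ (of dimension at least $t$) lies in $\ker M_p$, which forces $\rank_\Bbbk M_p \le r-t$. Membership $p\in X^{r-t+1}_M$ follows as before.

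There is no real obstacle here. The only point needing care is well-definedness: the matrices are evaluated at projective points, so they are determined only up to scalar. The vanishing locus of minors is independent of the chosen representative because the minors are homogeneous.
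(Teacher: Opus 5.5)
Your proposal is correct and is essentially the paper's argument: the paper gives no written proof and only calls the lemma ``an easy application of rank--nullity theorem.'' Your argument fills this in directly: the image of $N_q$ lies in $\ker M_p$, so $\rank_\Bbbk M_p + \rank_\Bbbk N_q \le r$, and the membership claims then follow from the remark $\rank_\Bbbk M_p < t \iff p \in X^t_M$.
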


\subsection{Koszul algebras}
Let $V$ be a finite dimensional vector space, and $T(V)$ the tensor algebra. 

\begin{definition}
A quotient algebra $A = T(V)/(R)$ is called a {\it quadratic algebra} if elements in $V$ have degree $1$ and $R \subset V^{\otimes 2}$.
\end{definition}

\begin{definition} 
A connected graded algebra $A$ is called {\it Koszul} if the canonical module $\Bbbk_A = A/A_{\geq 1}$ admits a linear resolution 
$P_{\bullet} = (P_i, d_i)$. Which is to say, there is an exact sequence 
$$
\xymatrix{\cdots \to P_2 \ar[r]^-{d_2} &  P_1 \ar[r]^-{d_1} & P_0 \to \Bbbk_A \to 0}
$$ 
where $P_i$ is projective and generated by its component in degree $i$, i.e., $P_i = (P_{i})_iA$. 
\end{definition}

\begin{proposition} \label{prop-kos}
Let $A$ be a Koszul algebra. Then the following statements hold. 
\begin{itemize}
\item[(1)] $A$ is quadratic \cite[Corollary 2.3.3]{BGS}.
\item[(2)] The opposite algebra $A^{op}$ is Koszul \cite[Proposition 2.2.1]{BGS}.
\item[(3)] Let $f\in A_2$ be a regular normal element. Then $A/(f)$ is Koszul \cite[Theorem 1.2]{ST01}. 
\end{itemize}
\end{proposition}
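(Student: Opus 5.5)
The statement to address is Proposition \ref{prop-kos}. Each of its three items is a standard result, and the paper cites a reference for each. My plan is therefore not to reprove them from scratch. Instead I will indicate how each follows from the definition of a linear resolution, so the reader can see why the citations apply in this setting.

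For (1), I will read off the presentation of $A$ from the beginning of a linear resolution $P_2 \to P_1 \to P_0 \to \Bbbk_A \to 0$. Since $P_0 = A$ and $P_1$ is generated in degree $1$, the image of $d_1$ is $A_{\geq 1}$, and it is generated by $A_1$; hence $A$ is generated in degree $1$, say $A = T(V)/I$ with $V = A_1$. Next, $\ker d_1$ is the image of $d_2$, so it is generated in degree $2$. I will identify the minimal generators of $\ker d_1$ with $\operatorname{Tor}_2^A(\Bbbk,\Bbbk)$, which in turn is isomorphic to $I/(I T(V)_{\geq 1} + T(V)_{\geq 1} I)$, the space of minimal relations. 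Linearity forces this space to be concentrated in degree $2$. Consequently $I$ is generated by $R = I_2 \subset V^{\otimes 2}$, i.e.\ $A$ is quadratic. This is the content of \cite[Corollary 2.3.3]{BGS}.

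For (2), I will use the characterization of Koszulity by Tor: $A$ is Koszul if and only if $\operatorname{Tor}^A_i(\Bbbk,\Bbbk)$ is concentrated in internal degree $i$ for all $i$. The point is that this condition is left--right symmetric. The group $\operatorname{Tor}^A_i(\Bbbk_A, {}_A\Bbbk)$ can be computed either from a resolution of the right module $\Bbbk_A$ or from a resolution of the left module ${}_A\Bbbk$. A right $A$-module resolution of ${}_A\Bbbk$ regarded via $A^{op}$ is exactly a resolution of $\Bbbk_{A^{op}}$. Hence linearity of the minimal resolution transfers from $A$ to $A^{op}$, which is \cite[Proposition 2.2.1]{BGS}.

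Item (3) is the genuinely nontrivial part, and here I will rely on \cite[Theorem 1.2]{ST01}. The idea of that argument is to compute $\operatorname{Tor}^B(\Bbbk,\Bbbk)$ via a change-of-rings spectral sequence, or more concretely via a Cartan--Eilenberg type argument for the extension $A \to B = A/(f)$. Because $f$ is regular and normal, the ideal satisfies $fA \cong A(-2)^{\sigma}$ as a right $A$-module, and $\operatorname{Tor}^A(B,\Bbbk)$ is concentrated in homological degrees $0$ and $1$, with the degree-$1$ part being $\Bbbk(-2)$. The main obstacle is showing that the differentials in this spectral sequence preserve the diagonal. The degree-$2$ shift from $f$ contributes to homological degree $2$ exactly in internal degree $2$, so all terms stay on the diagonal, provided $f$ has degree exactly $2$. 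This is precisely why the hypothesis $f \in A_2$ is needed, and it is also the mechanism behind the explicit resolution $T_\bullet$ of Theorem \ref{thm.introt1}.
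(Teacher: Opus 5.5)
Your proposal matches the paper, which gives no argument for this proposition beyond the three citations (Beilinson--Ginzburg--Soergel for (1) and (2), Shelton--Tingey for (3)). Your sketches are sound: minimal relations as $\operatorname{Tor}_2$, left--right symmetry of $\operatorname{Tor}^A(\Bbbk,\Bbbk)$, and the two-row change-of-rings spectral sequence, whose exact sequences $\operatorname{Tor}^A_n(\Bbbk,\Bbbk)\to\operatorname{Tor}^B_n(\Bbbk,\Bbbk)\to\operatorname{Tor}^B_{n-2}(\Bbbk,\Bbbk)(-2)$ give Koszulity of $B$ by induction on $n$ precisely because $\deg f=2$. One small slip: in (2) you mean a \emph{left} $A$-module resolution of ${}_A\Bbbk$, viewed as a right $A^{op}$-module resolution of $\Bbbk_{A^{op}}$.
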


\begin{definition}[\cite{AS87}]
{\rm
A  noetherian connected graded algebra $A$ is called a {\it quantum polynomial algebra of dimension $n$} if
\begin{itemize}
\item[(1)]  $\gld  A = n < \infty$,
\item[(2)] $\underline{\Ext}^i_A(\Bbbk,A)\cong \underline{\Ext}^i_{A^{op}}(\Bbbk,A) \cong  \begin{cases}0 & i \neq n \\ \Bbbk(n) & i = n\end{cases}$, 
\item[(3)] $H_A(t)=(1-t)^{-n}$.
\end{itemize}
}
\end{definition}

 Zhang shows that every quantum polynomial algebra is Koszul \cite[Theorem 5.11]{Shi96}.

\subsection{Point varieties}
Let $A = T(V)/I$ be a connected graded algebra finitely generated in degree $1$. 
Then each element in $I_i \subset V^{\otimes i}$ defines a multilinear form 
$
(V^*)^{\times i} \to \Bbbk. 
$
Define a projective scheme associated to $I_i$ by
$$
\mathcal{V}(I_i) 
:= \{ (p_1, \dots, p_{i}) \in \mathbb{P}(V^*)^{\times i} \mid f (p_1, \dots, p_{i}) = 0, \; \forall f \in I_i \}.
$$
Denote $\Gamma_i = \mathcal{V}(I_i)$. For $j \geq i$, let $p^j_i: \Gamma_j \to \Gamma_i$ be the restriction of the projection $\mathbb{P}(V^*)^{\times j} \to \mathbb{P}(V^*)^{\times i}$ onto the first $i$ factors. Then $\{\Gamma_i, p^j_i \}$ becomes an inverse system of schemes.

\begin{definition}[\cite{ATV90}]
The {\it point scheme} of $A$ is defined to be the inverse limit $\Gamma: = \varprojlim \Gamma_i$. If we consider only the reduced structure of $\Gamma$, we call it the {\it point variety}. 
\end{definition}

In this paper, we mainly consider quadratic algebras. The following definition is useful for characterizing the point varieties of quadratic algebras.

\begin{definition}[\cite{Mo06}] \label{defn-g1}
We say a quadratic algebra $A = T(V)/(R)$ satisfies the  {\it (G1) condition} if there is a pair $(E, \sigma)$, where $E\subset \mathbb{P}(V^*)$ is a projective variety and $\sigma\in \Aut_\Bbbk E$, such that
\[
\mathcal{V}(R)=\{(p, \sigma(p))\in \mathbb{P}(V^*)\times \mathbb{P}(V^*)\mid p\in E\}.
\]
If $A$ satisfies the (G1) condition, the pair $(E,\sigma)$ is called the {\it geometric pair} of $A$.
\end{definition}

We mention that a commutative quadratic algebra $A = \Bbbk[x_1, \dots, x_n]/I$ always satisfies the (G1) condition, with geometric pair $(E = \mathcal{Z}(I), \sigma = \id)$ \cite[Lemma 2.5]{HMM23}. 

If a quadratic algebra $A$ satisfies the (G1) condition with geometric pair $(E,\sigma)$, then its point variety $\Gamma \cong E$ by \cite{ATV90} (see also \cite[Lemma 4.1]{V10}).

\begin{remark}\label{rem-asg}
The following algebras satisfy the (G1) condition:
\begin{itemize}
\item[(1)] $2$ and $3$-dimensional quantum polynomial algebras \cite[Theorem 1]{ATV90};
\item[(2)] Calabi-Yau conics \cite[Proposition 4.5]{HMM23};
\item[(3)] Auslander-regular 4-dimensional quantum polynomial algebras \cite[Theorem 1.4]{SV99};
\item[(4)] skew polynomial algebras (of dimension $\geq 2$) \cite[Proposition 4.2]{V10} (see also \cite{BDL});
\item[(5)] Clifford quantum polynomial algebras (of dimension $\geq 2$) and Clifford quadratic complete intersections \cite[Theorem 1.4]{HM24}. 
\end{itemize} 
\end{remark}

\section{Point-exact conditions} \label{sec3}

In this section, $A = T(V)/(R) = \Bbbk \langle x_1,\dots,x_n \rangle/(f_1,\dots, f_r)$ is a Koszul algebra.
We may write 
$$
f_i = \sum_{j=1}^{n}x_jf_{ji}=\sum_{j=1}^{n}g_{ij}x_j
$$
for every $1\leq i\leq r$. Then $\Bbbk_A$ has a linear resolution
\begin{align}
P_\bullet: \ \ \ \cdots \to A(-i)^{\oplus s_i} \xlongrightarrow{d_i}  \cdots \to A(-2)^{\oplus s_2} \xlongrightarrow{d_2}  A(-1)^{\oplus s_1} \xlongrightarrow{d_1} A \to  0 \label{lr1}
\end{align}
where $s_1 = n$, $s_2 = r$, $d_1=(x_1,\dots,x_n)$, and $d_2=(f_{ij}) \in \mathbb{M}_{n\times r}(V)$. Similarly, by Proposition \ref{prop-kos} (2),  ${_A\Bbbk}$ has a linear resolution
\begin{align}
Q_\bullet:  \  \  \ \cdots \to A(-i)^{\oplus s_i} \xlongrightarrow{h_i} \cdots \to A(-2)^{\oplus s_2} \xlongrightarrow{h_2}  A(-1)^{\oplus s_1} \xlongrightarrow{h_1} A \to  0 \label{lr2}
\end{align}
where $ h_1 = (x_1,\dots,x_n)^t$ and $h_2=(g_{ij}) \in \mathbb{M}_{r \times n}(V)$. 
We define
$$
X_A := X_{d_2} = \mathcal{Z} (\{n\text{-minors of } d_2\}),\; 
{_A}X := X_{h_2} = \mathcal{Z} (\{n\text{-minors of } h_2\}) \subset \mathbb{P}^{n-1}.
$$
 We mention that $X_A = {_A}X = \mathbb{P}^{n-1}$ if $n > r$.

\begin{remark}\label{zero-wd}
 	Let $P_{\bullet}'$ be another linear resolution of $\Bbbk_A$. By the uniqueness of linear resolutions up to isomorphism, there exists a chain isomorphism $\varphi=(\varphi_i)_{i\geq 0}: P_{\bullet} \rightarrow P_{\bullet}'$ with $\varphi_i \in \mathbb{M}_{s_i}(\Bbbk)$ for all $i\geq 0$. Thus  $X_A$ is well-defined for $A$. Similarly, ${_A}X$ is also well-defined for $A$.
\end{remark} 

The following result follows from the constructions of $P_\bullet$ and $Q_\bullet$. 

\begin{lemma} \label{lem-vrdh}
We have  
\begin{align*}
\mathcal{V}(R) & = \{(p,q) \in \mathbb{P}(V^*) \times \mathbb{P}(V^*) \mid (d_1)_p (d_2)_q = 0\}    \\
& = \{(p,q) \in \mathbb{P}(V^*) \times \mathbb{P}(V^*) \mid (h_2)_p (h_1)_q = 0\}.
\end{align*} 
Which implies that the following are equivalent.
\begin{itemize}
\item[(1)] $\mathcal{V}(R) = \emptyset$.
\item[(2)] $X_A = \emptyset$.
\item[(3)] ${_A}X = \emptyset$.
\end{itemize}
\end{lemma}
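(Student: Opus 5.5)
The plan is to write out the matrix products explicitly and compare them with the multilinear forms defining $\mathcal{V}(R)$, using the identity $f_i = \sum_j x_j f_{ji} = \sum_j g_{ij} x_j$. Identify $p = (p_1 \mathpunct{:} \cdots \mathpunct{:} p_n)$, $q = (q_1\mathpunct{:}\cdots\mathpunct{:}q_n) \in \mathbb{P}(V^*)$ with points of $V^*$ up to scalar. Each relation $f_i \in R \subset V^{\otimes 2}$ gives the bilinear form $f_i(p,q)$. Writing $f_{ji} = \sum_k c^{(i)}_{jk} x_k$, we have $f_i = \sum_{j,k} c^{(i)}_{jk}\, x_j \otimes x_k$ and $f_i(p,q) = \sum_{j,k} c^{(i)}_{jk} p_j q_k$.

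The $i$-th entry of the row vector $(d_1)_p (d_2)_q$ is $\sum_j p_j (f_{ji})_q = \sum_{j,k} c^{(i)}_{jk} p_j q_k = f_i(p,q)$. Since $R$ is spanned by $f_1,\dots,f_r$, the condition $(d_1)_p(d_2)_q = 0$ is therefore equivalent to $f(p,q) = 0$ for all $f \in R$, i.e.\ to $(p,q) \in \mathcal{V}(R)$. In the same way, the $i$-th entry of the column vector $(h_2)_p (h_1)_q$ is $\sum_j (g_{ij})_p q_j$. Because $\sum_j g_{ij} \otimes x_j$ and $\sum_j x_j \otimes f_{ji}$ are the same element $f_i$ of $V^{\otimes 2}$, this sum also equals $f_i(p,q)$. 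That gives both descriptions of $\mathcal{V}(R)$.

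For the equivalences, note that $(d_1)_p$ is the nonzero row vector $(p_1,\dots,p_n)$. So for fixed $q$, a point $p$ with $(p,q) \in \mathcal{V}(R)$ exists if and only if the left kernel of the $n\times r$ matrix $(d_2)_q$ is nonzero. This happens exactly when $\rank_\Bbbk (d_2)_q < n$, i.e.\ when all $n$-minors of $d_2$ vanish at $q$, i.e.\ when $q \in X_A$. Hence $X_A$ is the image of $\mathcal{V}(R)$ under the second projection, and (1)$\Leftrightarrow$(2) follows.

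Symmetrically, for fixed $p$ a point $q$ with $(h_2)_p(h_1)_q = 0$ exists if and only if the $r\times n$ matrix $(h_2)_p$ has nonzero kernel. This holds exactly when $\rank_\Bbbk (h_2)_p < n$, i.e.\ when $p \in {_A}X$. So ${_A}X$ is the first projection of $\mathcal{V}(R)$, giving (1)$\Leftrightarrow$(3). The convention that $X_A = {_A}X = \mathbb{P}^{n-1}$ when $n>r$ is consistent with this, since then both rank conditions hold automatically.

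The only real care needed is the bookkeeping that the two factorizations of $f_i$ produce the same bilinear form in the same order of arguments: $p$ must go in the first tensor slot and $q$ in the second. I would check this by expanding both in the basis $x_j\otimes x_k$. Everything else is rank–nullity.
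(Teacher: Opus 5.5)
Your proposal is correct and takes the same approach as the paper. The paper gives no separate proof; it only says the lemma follows from the constructions of $P_\bullet$ and $Q_\bullet$, namely $d_1d_2=(f_1,\dots,f_r)$ and $h_2h_1=(f_1,\dots,f_r)^t$, and your entry-by-entry identification with $f_i(p,q)$ together with the rank--nullity argument ($X_A=\pi_2(\mathcal{V}(R))$, ${_A}X=\pi_1(\mathcal{V}(R))$) is exactly the reasoning the paper then uses in the proof of Lemma \ref{lem-E1}.
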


\begin{remark}
In this paper, we are only interested in the case where $\mathcal{V}(R) \neq \emptyset$. 
Thus we will always assume that $X_A$ and ${_A}X$ are non-empty.
\end{remark}

\begin{definition} \label{def.sest}
We say $A$ is {\it semi-standard} if $X_A={_A}X$. 
\end{definition}

\begin{definition}\label{def-pe}
\begin{itemize}
\item[(1)] For $i \geq 1$, we say $A$ is {\it right point-exact at degree $i$} if 
\begin{equation}\label{rpe}
	\rank_\Bbbk (d_{i})_p + \rank_\Bbbk (d_{i+1})_p = \rank_A P_i = s_i, \ \forall p \in X_A.
\end{equation}
\item[(2)] For $\ell \geq 1$, we say $A$ is {\it right point-exact up to degree} $\ell$ if it is right point exact at degree $i$ for all $1 \leq i \leq \ell$. 
\item[(3)] We say $A$ satisfies the {\it right point-exact condition} if it is right point-exact at every degree $\geq 1$. 
\end{itemize}
\end{definition}

The left version of the above definition is analogous. We say $A$ is {\it point-exact at degree $i$} (resp. {\it up to degree $\ell$}) if it is right and left point-exact at degree $i$ (resp. up to degree $\ell$). Moreover, $A$ satisfies the {\it point‑exact condition} if it satisfies both the right and left point‑exact conditions. 

\begin{remark}
	The right (left) point-exact condition is well-defined by the same reasoning as in Remark $\ref{zero-wd}$.
\end{remark}

\begin{lemma} \label{lem-rkdi}
The following are equivalent.
\begin{itemize}
\item[(1)] $A$ is right point-exact up to degree $ \ell \geq 1$. 
\item[(2)] $\rank_\Bbbk (d_i)_p = \sum_{j=1}^i (-1)^{j+1} \rank_\Bbbk P_{i-j}, \ \forall p \in X_A$, where $1 \leq i \leq \ell + 1$.
\end{itemize}
\end{lemma}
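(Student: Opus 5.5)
The plan is an induction on $i$. The equivalence is a telescoping identity, once we observe that the case $i=1$ of (2) holds automatically. Throughout, write $r_i(p) := \rank_\Bbbk (d_i)_p$ for $p \in X_A$, and recall that $\rank_A P_j = s_j$ with $s_0 = 1$. Condition (2) then reads $r_i(p) = \sum_{j=1}^{i}(-1)^{j+1}s_{i-j}$ for $1\le i\le \ell+1$.

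First I check the base case. Since $d_1 = (x_1,\dots,x_n)$ is a $1\times n$ row and $p=(p_1\mathpunct{:}\cdots\mathpunct{:}p_n)$ has some $p_k\neq 0$, the row $(d_1)_p$ is nonzero. Hence $r_1(p)=1=s_0$ for every $p\in\mathbb{P}(V^*)$, in particular for every $p \in X_A$. This is exactly the $i=1$ case of (2), so (2) imposes no condition at $i=1$.

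Next I record the recursion. By Definition \ref{def-pe}, right point-exactness at degree $i$ means $r_{i+1}(p) = s_i - r_i(p)$ for all $p\in X_A$. Set $c_i := \sum_{j=1}^{i}(-1)^{j+1}s_{i-j}$. Reindexing gives
\begin{equation*}
c_{i+1} = s_i + \sum_{j=2}^{i+1}(-1)^{j+1}s_{i+1-j} = s_i - \sum_{j'=1}^{i}(-1)^{j'+1}s_{i-j'} = s_i - c_i ,
\end{equation*}
and $c_1 = s_0$.

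For (1)$\Rightarrow$(2), I induct on $i$. The case $i=1$ is the base case above. If $r_i(p)=c_i$ for some $i\le \ell$, then point-exactness at degree $i$ gives $r_{i+1}(p)=s_i-c_i=c_{i+1}$. For (2)$\Rightarrow$(1), fix $1\le i\le\ell$. Both $r_i(p)=c_i$ and $r_{i+1}(p)=c_{i+1}$ hold by (2), since $i+1\le \ell+1$. Therefore $r_i(p)+r_{i+1}(p)=c_i+c_{i+1}=s_i$, which is point-exactness at degree $i$.

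I do not expect a genuine obstacle here. The only point needing care is the base case: recognising that $\rank_\Bbbk(d_1)_p=1$ at every point, because $d_1$ is the row of generators. This is what lets the recursion start without any hypothesis. The rest is the telescoping identity $c_{i+1}=s_i-c_i$.
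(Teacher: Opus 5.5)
Your argument is correct. The paper states this lemma without proof, treating it as immediate from Definition \ref{def-pe}, and your induction via $c_{i+1} = s_i - c_i$, started by $\rank_\Bbbk (d_1)_p = 1 = s_0$ for every $p$, is exactly the routine verification left to the reader; you also correctly read $\rank_\Bbbk P_{i-j}$ as $\rank_A P_{i-j} = s_{i-j}$.
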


We attribute the proof of the following lemma to \cite{ATV90}.

\begin{lemma}\label{lem-E1}
The following are equivalent.
\begin{itemize}
\item[(1)] $A$ satisfies the (G1) condition.
\item[(2)] $A$ is semi-standard and point-exact at degree $1$. 
\end{itemize}
Moreover, if $A$ satisfies the above equivalent conditions, then $r+1 \geq n$ and the geometric pair $(E, \sigma)$ associated to $A$ is given by $E = X_A = {_A}X$, and 
$
\sigma: E \to E, \; p \mapsto q 
$
such that $(d_1)_p (d_2)_q = 0$ (or equivalently $(h_2)_p (h_1)_q= 0$).
\end{lemma}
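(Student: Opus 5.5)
The plan is to work with the description of $\mathcal{V}(R)$ from Lemma \ref{lem-vrdh}, namely pairs $(p,q)$ with $(d_1)_p(d_2)_q=0$, equivalently $(h_2)_p(h_1)_q=0$. Note that $(d_1)_p$ is the nonzero row vector $p$, so $\rank_\Bbbk (d_1)_p=1$ for every $p$. Thus right point-exactness at degree $1$ says exactly that $\rank_\Bbbk (d_2)_p=n-1$ for all $p\in X_A$. Dually, left point-exactness at degree $1$ says $\rank_\Bbbk (h_2)_p=n-1$ for all $p\in {_A}X$.

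\emph{(2)$\Rightarrow$(1).} Put $E=X_A={_A}X$. First I claim the first projection of $\mathcal{V}(R)$ is ${_A}X$ and the second projection is $X_A$. Fix $q$. There is some $p$ with $(d_1)_p(d_2)_q=0$ iff the left kernel of $(d_2)_q$ is nonzero, i.e.\ iff $\rank (d_2)_q<n$, i.e.\ iff $q\in X_A$. The same argument with $h$ shows that a $q$ exists for a given $p$ iff $p\in{_A}X$.

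Next fix $q\in X_A$. Since $\rank(d_2)_q=n-1$, its left kernel is one-dimensional, so $p$ is unique; write $p=\tau(q)$. Similarly, for $p\in {_A}X$ the corresponding $q$ is unique; write $q=\sigma(p)$. By semi-standardness both maps go $E\to E$ and are mutually inverse. Hence $\mathcal{V}(R)$ is the graph of the bijection $\sigma$ of $E$.

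It remains to see that $\sigma$ is a morphism. Because $(h_2)_p$ has constant rank $n-1$ on $E$, locally some $(n-1)$-minor is nonvanishing. There the kernel vector $q$ is given by signed $(n-1)$-minors (Cramer/adjugate), which are polynomial in $p$. So $\sigma$ is a morphism with inverse morphism $\tau$, and $\sigma\in\Aut E$.

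\emph{(1)$\Rightarrow$(2).} Let $\mathcal{V}(R)=\{(p,\sigma(p))\mid p\in E\}$. The projection computations above give $E={_A}X$ (the first projection) and $\sigma(E)=E=X_A$ (the second projection). Hence $A$ is semi-standard.

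For $q\in X_A$ the graph property forces a unique $p$ with $(d_1)_p(d_2)_q=0$, namely $p=\sigma^{-1}(q)$. Since the points $p$ are exactly the projectivized left kernel of $(d_2)_q$, that kernel is one-dimensional. So $\rank(d_2)_q=n-1$, which is right point-exactness at degree $1$. The left version follows symmetrically via uniqueness of $q$ given $p$.

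\emph{The final assertions.} Since $(d_2)_q$ is an $n\times r$ matrix of rank $n-1$, we get $r\ge n-1$ as long as $E\neq\emptyset$, which is the standing assumption. The description of $(E,\sigma)$ is exactly what the construction produced.

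The main obstacle is the regularity of $\sigma$ in the direction (2)$\Rightarrow$(1). One has to check carefully that the adjugate-type minors give a consistent, well-defined morphism on all of $E$, and that the scheme/variety distinction does not interfere. Since we work with reduced varieties over an algebraically closed field, I expect the constant-rank argument to suffice.
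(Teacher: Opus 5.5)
Your proof is correct and follows essentially the same route as the paper. Like the paper, you identify the two projections of $\mathcal{V}(R)$ with ${_A}X$ and $X_A$ through the kernels of $(h_2)_p$ and $(d_2)_q$, and you convert uniqueness of the partner point into $\rank = n-1$. Your signed-minor argument for the regularity of $\sigma$ and its inverse fills in a step the paper only asserts: it concludes that $\pi_1|_{\mathcal{V}(R)}$ is an isomorphism from bijectivity alone. You also make the bound $r\ge n-1$ explicit, which the paper leaves implicit.
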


\begin{proof}
Assume $A$ satisfies the (G1) condition with geometric pair $(E,\sigma)$. Let $q \in X_A = X_{d_2}$. Then there is $p \in \mathbb{P}^{n-1}$ such that
$$
(d_1)_p (d_2)_q = (f_1 (p,q), \dots, f_r (p,q)) = 0
$$
since $\rank_\Bbbk (d_2)_q < n$. Hence $(p,q) \in \mathcal{V}(R)$, which implies that $p, q\in E$ and $\sigma(p) = q$ by assumption. It follows that $p$ is uniquely determined by $q$. Thus $\rank_\Bbbk(d_2)_q = n-1$, i.e., $A$ is right point-exact at degree $1$. 
A similar argument shows that $A$ is left point‑exact at degree $1$.
On the other hand, for any $(p,q)\in \mathcal{V}(R)$, we have 
$$
(h_2)_p (h_1)_q  = (f_1 (p,q), \dots, f_r (p,q))^t = 0. 
$$
Since $\rank_\Bbbk (h_1)_q = 1$, Lemma \ref{lem-rk} (2) gives $p \in X_{h_2} = {_A}X$. 
Therefore $X_A \subset E \subset {_A}X$. The reverse inclusion ${_A}X \subset E \subset X_A$ is proved similarly.

Now assume that $A$ is semi-standard and point-exact at degree $1$.
Let  $\pi_1, \pi_2: \mbP(V^*) \times \mbP(V^*) \to \mbP(V^*)$ be the canonical projections. 
Then the restriction
$$
\pi_1|_{\mathcal{V}(R)} : \mathcal{V}(R) \to {_A}X =: E, \, (p,q) \mapsto p
$$
is a well-defined morphism of varieties. Since for any $p \in E$, $(h_2)_p (h_1)_q = 0$ implies that $q$ is uniquely determined by $p$, $\pi_1|_{\mathcal{V}(R)}$ is an isomorphism. Similarly, $\pi_2|_{\mathcal{V}(R)}: \mathcal{V}(R) \to X_A = E$ is also an isomorphism. 
Then 
\begin{equation*}
\mathcal{V}(R) = \{ (p, \sigma(p)) \mid p \in E, \, \sigma =  \pi_1|_{\mathcal{V}(R)} \circ (\pi_2|_{\mathcal{V}(R)})^{-1} \in \Aut_\Bbbk E\}. \qedhere
\end{equation*}
\end{proof}

\begin{example} \label{exm-2dimqpa}
Let $A = k\langle x,y\rangle/(xy-\lambda yx), 0\neq \lambda \in \Bbbk$. Then $A$ is a $2$-dimensional quantum polynomial algebra, and we have linear resolutions
$$
\xymatrix{0 \to  A(-2) \ar[r]^-{d_2} & A(-1)^{\oplus 2} \ar[r]^-{d_1} & A \to \Bbbk_A \to 0}
$$ 
where $d_1 = (x, y)$, $d_2 = (y, - \lambda x)^t$, and
$$
\xymatrix{0 \to  A(-2) \ar[r]^-{h_2} & A(-1)^{\oplus 2} \ar[r]^-{h_1} & A \to {_A\Bbbk} \to 0}
$$ 
where $h_1 = (x, y)^t$, $d_2 = (-\lambda y, x)$. 
Then $X_A = {_A}X =\mbP^1$, and 
$$
\rank_\Bbbk (d_1)_p + \rank_\Bbbk (d_2)_p = 2  = \rank_\Bbbk (h_1)_p + \rank_\Bbbk (h_2)_p, \ \forall p \in \mbP^1. 
$$
Thus $A$ is semi-standard and point exact at degree $1$. By Lemma \ref{lem-E1}, $A$ satisfies the (G1) condition with geometric pair $(E, \sigma)$ where $E = \mbP^1$, and $\sigma: \mbP^1 \to \mbP^1, \; p = (p_1 \mathpunct{:} p_2) \mapsto \sigma(p) = ( p_1 \mathpunct{:} \lambda p_2)$.
\end{example}

The following is an interesting observation.

\begin{lemma}\label{lem-eqrep}
Assume $A$ satisfy the (G1) condition with geometric pair $(E,\sigma)$. Then $A$ satisfies the right point-exact condition if and only if  for every $p \in E$ the  complex 
	\begin{align}
		\cdots \to \Bbbk^{s_n} \xlongrightarrow{(d_n)_{\sigma^{n-1}(p)}} \cdots \to \Bbbk^{s_{2}}  \xlongrightarrow{(d_2)_{\sigma(p)}} \Bbbk^{s_1} \xlongrightarrow{(d_1)_p} \Bbbk \rightarrow 0  \label{def-rep}
	\end{align}
	is exact.
\end{lemma}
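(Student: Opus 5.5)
Two observations drive the argument. First, the complex \eqref{def-rep} is indeed a complex. Second, the kernel at each spot of \eqref{def-rep} has exactly the dimension that right point-exactness at the corresponding degree prescribes. Once both are in place, the equivalence reduces to rank counting.

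\emph{Step 1: the complex \eqref{def-rep} is a complex.} Since $P_\bullet$ is linear, each $d_i$ is a matrix over $V$. We have $d_i d_{i+1}=0$ in $A$, and every entry of this product lies in $A_2=V^{\otimes 2}/R$. Lifting to $T(V)$, each entry of $d_id_{i+1}$ becomes an element of $R$. Each element of $R$ is a bilinear form, so it vanishes on $(p,\sigma(p))$ for all $p\in E$, because $(p,\sigma(p))\in\mathcal V(R)$. Evaluating the first tensor factor at $\sigma^{i-1}(p)$ and the second at $\sigma^{i}(p)$ therefore gives
\[
(d_i)_{\sigma^{i-1}(p)}(d_{i+1})_{\sigma^{i}(p)}=0 ,
\]
using that $(\sigma^{i-1}(p),\sigma^{i}(p))\in\mathcal V(R)$, which holds since $\sigma^{i-1}(p)\in E$.

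\emph{Step 2: kernels.} By Lemma \ref{lem-E1}, $E=X_A$, and $\sigma$ is an automorphism of $E$. Hence as $p$ ranges over $E$, each point $\sigma^{i}(p)$ also ranges over all of $E=X_A$. Exactness of \eqref{def-rep} at $\Bbbk^{s_i}$ says
\[
\rank(d_i)_{\sigma^{i-1}(p)}+\rank(d_{i+1})_{\sigma^{i}(p)}=s_i .
\]
This differs from the point-exactness identity \eqref{rpe} only in that the two matrices are evaluated at different points, $\sigma^{i-1}(p)$ and $\sigma^{i}(p)$. To bridge this, I would show that $\rank(d_i)_q$ depends only on the $\sigma$-orbit data in a controlled way. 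The cleanest route is Lemma \ref{lem-rkdi}.

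\emph{Step 3, direction ($\Rightarrow$).} Under right point-exactness, Lemma \ref{lem-rkdi} shows that $\rank(d_i)_q=c_i:=\sum_{j=1}^i(-1)^{j+1}s_{i-j}$ is constant for all $q\in X_A=E$. Moreover $c_i+c_{i+1}=s_i$. So at every spot of \eqref{def-rep},
\[
\rank(d_i)_{\sigma^{i-1}(p)}+\rank(d_{i+1})_{\sigma^{i}(p)}=s_i ,
\]
which together with the complex property of Step 1 is exactness.

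\emph{Step 3, direction ($\Leftarrow$).} Conversely, suppose \eqref{def-rep} is exact for every $p\in E$. I would prove by induction on $i$ that $\rank(d_i)_q=c_i$ for all $q\in E$. The base case is $i=1$: $(d_1)_q\neq0$, so its rank is $1=c_1$. For the inductive step, fix $q\in E$ and apply exactness at $\Bbbk^{s_i}$ with $p=\sigma^{-i}(q)$; this is legitimate because $\sigma$ is bijective on $E$. This yields
\[
\rank(d_{i+1})_q=s_i-\rank(d_i)_{\sigma^{-1}(q)}=s_i-c_i=c_{i+1}.
\]
Then Lemma \ref{lem-rkdi}, applied for every $\ell$, gives right point-exactness.

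The main obstacle is Step 1. One has to make sure that evaluating the matrix product at mixed points is legitimate, i.e.\ that the entries of $d_id_{i+1}$ lift to elements of $R$ viewed as multilinear forms, with the correct ordering of the two tensor factors. The remainder is bookkeeping with ranks.
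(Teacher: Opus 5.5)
Your proof is correct and follows the same route as the paper. The paper simply asserts that exactness of \eqref{def-rep} for every $p\in E$ is equivalent to $\rank_\Bbbk(d_i)_p=\sum_{j=1}^i(-1)^{j+1}s_{i-j}$ for all $i\ge 1$ and all $p\in E=X_A$, and then invokes Lemma \ref{lem-rkdi}. Your Steps 1 and 3 supply the details the paper leaves implicit: \eqref{def-rep} is a complex because $(\sigma^{i-1}(p),\sigma^{i}(p))\in\mathcal{V}(R)$, and the reindexing $p=\sigma^{-i}(q)$ in the converse is legitimate because $\sigma$ is a bijection of $E$.
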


\begin{proof}
The complex in (\ref{def-rep}) is exact for every $p\in E$ if and only if 
$$
\rank_\Bbbk (d_i)_p = \sum_{j=1}^i (-1)^{j+1} s_{i-j}, \, \forall i \geq 1, \forall p \in E.
$$
The result follows from Lemma \ref{lem-rkdi}.
\end{proof}

\begin{proposition}\label{prop.34qec}
The following algebras satisfy the point-exact condition. 
\begin{itemize}
\item[(1)] $2$ and $3$-dimensional quantum polynomial algebras.
\item[(2)] $4$-dimensional quantum polynomial algebras satisfying the (G1) condition. 
\end{itemize}
\end{proposition}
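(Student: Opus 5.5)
The plan is to use the explicit shape of the Koszul resolutions of quantum polynomial algebras of dimension $n \le 4$, together with the Gorenstein symmetry of $P_\bullet$. By the AS-Gorenstein condition, $P_\bullet$ has length $n$ and $s_i = \binom{n}{i}$. Applying $\underline{\Hom}_A(-,A)$ to $P_\bullet$ gives a linear resolution of ${_A\Bbbk}$. Hence, up to the choice of bases (Remark \ref{zero-wd}), we may take $h_i = (d_{n+1-i})^t$, possibly twisted by the Nakayama automorphism; since that automorphism is graded and degree preserving on $V$, it acts on $\mathbb{P}(V^*)$ as a linear automorphism, and point evaluations are only reparametrised by it. Thus ranks of $(h_i)_p$ correspond to ranks of $(d_{n+1-i})_{p'}$ for a matching point $p'$. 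In this way left point-exactness reduces to right point-exactness at the complementary degree, and semi-standardness can be checked through the same duality.

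I then verify \eqref{rpe} degree by degree for $p \in X_A$. In every degree, $(d_i)_p(d_{i+1})_p$ is not literally zero, since $d_i d_{i+1}=0$ holds only in $A$. So I only use ranks and the relation $(d_i)_p(d_{i+1})_{\sigma(p)}=0$ from Lemma \ref{lem-vrdh} and its higher analogues. This is why the lower bounds come from the varieties $X^t$, and the upper bounds must come from the (G1) structure.

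\textbf{Degree $1$ and degree $n$.} The matrix $d_1=(x_1,\dots,x_n)$ has rank $1$ at every $p$. The matrix $d_n$ is a column with entries spanning $V$ (minimality plus the Gorenstein duality $d_n \leftrightarrow h_1^t$), so it has rank $1$ everywhere. Point-exactness at degree $1$ says $\rank (d_2)_p = n-1$ for $p\in X_A$. For $n=2,3$ this, together with semi-standardness, is exactly the (G1) condition of \cite{ATV90}; in case (2) it is the hypothesis. Lemma \ref{lem-E1} then gives both semi-standardness and degree-$1$ exactness. Dually, degree $n-1$ reduces to degree $1$ for the opposite algebra, which is again a quantum polynomial algebra satisfying (G1). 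So $\rank (d_{n-1})_p = 1$ on $X_A$ must be transported through the duality, giving $\rank(d_{n-1})_p + \rank(d_n)_p = s_{n-1}$ at degree $n-1$. Degree $n$ says $\rank (d_n)_p = 1 = s_n$, which holds, and higher degrees are trivial because $d_i=0$ for $i>n$.

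For $n=2$ this settles every degree. For $n=3$ the degrees are $1,2,3$; degree $2$ is the dual of degree $1$ ($s_2=3$, ranks $2+1$). So the real work is $n=4$ at degree $2$, where we must show $\rank(d_2)_p+\rank(d_3)_p=6$ on $X_A=E$. Here $\rank(d_2)_p=3$ by degree-$1$ exactness, and $\rank (d_3)_p =3$ by duality, since $d_3$ corresponds to $h_2^t$ and left degree-$1$ exactness holds on ${_A}X=E$.

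\textbf{Main obstacle.} The main difficulty is making the Gorenstein duality precise at the level of point evaluations. One must check that the identification $h_{i}\simeq (d_{n+1-i})^t$ up to automorphism really transports $X_A$ to ${_A}X$ and preserves ranks pointwise. I would first write $\underline{\Hom}_A(P_\bullet,A)$ explicitly, then note that the resulting left resolution differs from $Q_\bullet$ by constant invertible matrices and a linear change of variables on $V$. I would also treat separately the degenerate possibility that $X_A=\mathbb{P}^{n-1}$ when $n>r$; for $n\le 4$ we have $r=s_2\ge n$, so this does not occur except when $n=2$, where it is handled directly as in Example \ref{exm-2dimqpa}.
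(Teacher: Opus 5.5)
Your proposal is correct and is essentially the paper's argument: dualizing $P_\bullet$ by $\underline{\Hom}_A(-,A)$ identifies $h_i$ with $d_{n+1-i}$, so (G1) and Lemma \ref{lem-E1} give $X_{d_2}=X_A={_A}X=X_{d_3}$ with $\rank_\Bbbk(d_2)_p=\rank_\Bbbk(d_3)_p=3$ there (for $n=4$), and Lemma \ref{lem-rkdi} finishes. Two small corrections: no Nakayama twist or change of variables on $V$ actually occurs, since $\underline{\Hom}_A(P_\bullet,A)$ acts by right multiplication on row vectors by the same matrices $d_i$ (a nontrivial twist would have forced you to check that it preserves $E$), and the rank of $(d_{n-1})_p$ on $X_A$ should be $n-1$, not $1$.
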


\begin{proof}
We prove (2). Let $A$ be a $4$-dimensional quantum polynomial algebra that satisfies the (G1) condition. Then $\Bbbk_A$ admits a linear resolution
\[ 0\rightarrow A(-4) \xrightarrow{d_4}A(-3)^{\oplus 4} \xrightarrow{d_3} A(-2)^{\oplus 6} \xrightarrow{d_2} A(-1)^{\oplus 4} \xrightarrow{d_1} A \to \Bbbk_A \to  0,\] 
where $d_1=(x_1,\dots,x_n)$ and $d_4=(x_1,\dots,x_n)^t$.
By applying $\underline{\Hom}(-,A)$ and the grading shift functor, we obtain a linear resolution of $_A\Bbbk$:
\[ 0\rightarrow A(-4) \xrightarrow{d_1}A(-3)^{\oplus 4} \xrightarrow{d_2} A(-2)^{\oplus 6} \xrightarrow{d_3} A(-1)^{\oplus 4} \xrightarrow{d_4} A \rightarrow {_A \Bbbk} \to 0 .\] 
Since $A$ satisfies the (G1) condition, $X_{d_2} = X_A = {_A}X =  X_{d_3} $ and $\rank_\Bbbk (d_{2})_p = \rank_\Bbbk (d_{3})_p =3 $ for every $p\in X_A$ by Lemma \ref{lem-E1}. Thus $A$ is point-exact by Lemma \ref{lem-rkdi}.
\end{proof}

\section{Minimal resolutions}\label{sec-resquo}

In this section, $A$ is a connected graded algebra, $f \in A_m$ is a regular normal element with $m \geq 1$, and $B = A/(f)$. 
Let $P_\bullet = (P_i, d_i)\xrightarrow{\varepsilon}\Bbbk_A \to 0$ be a minimal resolution. 
We will construct a free resolution $T_\bullet\xrightarrow{\sim}\Bbbk_B$ from $P_\bullet$ by generalizing Shamash's method in the commutative case \cite{Sh69}. 

Let $\sigma$ be the normalizing automorphism of $f$ and set $\tau := \sigma^{-1}$. 
The normalizing automorphism induces an isomorphism 
$
\tilde{\sigma}: A \rightarrow A^{\sigma}, \, a \mapsto \sigma(a)
$
in $\GrM A$. 
We also define 
$\gamma: A^{\sigma}(-m)\rightarrow  A$, $a \mapsto af$, which is a morphism
in $\GrM A$. 

Let $F_0$, $F_1$ be free objects in $\GrM A$, and $\varphi: F_0 \to F_1$ a morphism in $\GrM A$. Recall that we define $\tau \cdot \varphi = (\tau(\varphi_{ij}))$ by regarding $\varphi = (\varphi_{ij})$ as a matrix in $\mathbb{M}_{\rank_A F_1 \times \rank_A F_0}(A)$. 
Then we have the following commutative diagram
\begin{equation} \label{twist}
\begin{tikzcd}
F_0(-m) \arrow[r, "\tilde{\sigma}_0"] \arrow[d, "\tau \cdot \varphi"']  & F_0^\sigma(-m) \arrow[r, "\gamma_0"] \arrow[d, "\varphi^{\sigma}"'] 
& F_0\arrow[d, "\varphi"]  \\
F_1(-m) \arrow[r, "\tilde{\sigma}_1"]   &
F_1^\sigma(-m) \arrow[r, "\gamma_1"] & 	
F_1,
\end{tikzcd}
\end{equation}
where $\tilde{\sigma}_i$ and $\gamma_i$ are morphisms induced by $\tilde{\sigma}$ and $\gamma$  (coordinate-wisely) for $i = 0,1$. Since $\gamma\circ \sigma$ is left multiplication by $f$, we have 
$$
\gamma_i \circ  \tilde{\sigma}_i  = f I_{\rank_A F_i} \in \mathbb{M}_{\rank_A F_i}(A), 
$$
where $I_{\rank_A F_i}$ is the identity matrix. 

Let $n \in \mbZ$. Denote by $P_\bullet(-n)$ the complex
$$
\cdots \to P_i(-n) \xrightarrow{d_i} \cdots \to P_2(-n) \xrightarrow{d_2} P_1(-n) \xrightarrow{d_1} P_0(-n) \to 0,
$$
and $P_\bullet[-n]$ the {\it shifted complex}: $P_\bullet[-n]_{i} = P_{i+n}$ with differential $(-1)^n d$.
Define a sequence
$$
\tau^n P_\bullet: \ \ \ \cdots \to P_i \xrightarrow{\tau^n \cdot d_i} \cdots \to P_2 \xrightarrow{\tau^n \cdot d_2} P_1 \xrightarrow{\tau^n \cdot d_1} P_0  \to 0
$$
in $\GrM A$. 

Using the commutative diagram in (\ref{twist}), we obtain the following commutative diagram: 
\begin{equation} \label{com.ppp}
\begin{tikzcd}
\cdots \arrow[r] & P_{i+1}(-m) \arrow[r, "\tau \cdot d_{i+1}"] \arrow[d, "\tilde{\sigma}_{i+1}"']  & P_i(-m) \arrow[r, "\tau \cdot d_i"] \arrow[d, "\tilde{\sigma}_{i}"'] & P_{i-1}(-m) \arrow[d, "\tilde{\sigma}_{i-1}"']  \arrow[r] & \cdots\\
\cdots \arrow[r] & P^\sigma_{i+1} (-m) \arrow[r, "(d_{i+1})^\sigma"]   \arrow[d, "\gamma_{i+1}"']& P^\sigma_i(-m) \arrow[r, "(d_{i})^\sigma"] 
\arrow[d, "\gamma_{i}"']&  P^\sigma_{i-1}(-m) \arrow[r] \arrow[d, "\gamma_{i-1}"']& \cdots \\
\cdots \arrow[r] & P_{i+1} \arrow[r, "d_{i+1}"]   & P_i  \arrow[r, "d_i"] &  P_{i-1} \arrow[r] & \cdots.
\end{tikzcd}
\end{equation}
In the diagram above, since the sequence $P_\bullet$ in the bottom row is a minimal resolution of $\Bbbk_A$, the sequence in the middle row, which can be obtained by applying the equivalence $(-)^\sigma$ to $P_\bullet(-m)$, is a minimal resolution of $\Bbbk_A^\sigma (-m) = \Bbbk_A (-m)$. Moreover, since $\tilde{\sigma}_i$ induced by $\tilde{\sigma}$ is an isomorphism for every $i \in \mathbb{Z}$, the sequence $\tau P_\bullet(-m)$ in the top row is a minimal resolution of $\Bbbk_A(-m)$. 
Thus we have proved the following result. 

\begin{lemma}\label{lem-tplin}
The sequence $\tau^n P_\bullet = (P_i, \tau^n \cdot d_i)$ is a minimal resolution of $\Bbbk_A$ for any $n \in \mathbb{Z}$.
\end{lemma}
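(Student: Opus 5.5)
The plan is to reduce to the case $n=1$ and then iterate. The diagram (\ref{com.ppp}) shows that $\tau P_\bullet(-m)$ is a minimal resolution of $\Bbbk_A(-m)$. Undoing the shift, this says that $\tau P_\bullet = (P_i, \tau\cdot d_i)$ is a minimal resolution of $\Bbbk_A$. So the statement holds for $n=1$, and for any minimal resolution $P_\bullet$ whatsoever, since nothing about $P_\bullet$ beyond being a minimal resolution of $\Bbbk_A$ was used.

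The same argument works with $\sigma$ replaced by $\tau=\sigma^{-1}$. Concretely, I will redo the argument for an arbitrary graded automorphism $\rho$ of $A$. The functor $(-)^\rho$ is an autoequivalence commuting with shifts, so $P_\bullet^\rho$ is a minimal resolution of $\Bbbk_A^\rho=\Bbbk_A$; the last equality holds because $\rho$ preserves $A_{\geq 1}$. The map $\tilde\rho\colon A\to A^\rho$, $a\mapsto \rho(a)$, is an isomorphism in $\GrM A$. Applied coordinatewise, it intertwines $\rho^{-1}\cdot d_i$ with $d_i^\rho$, by the left square of (\ref{twist}), which only uses that $\tilde\rho$ is a module map. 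Note that $f$ plays no role in this part, and no shift $(-m)$ is needed.

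The identity to check is $\tilde\rho_{i-1}\circ(\rho^{-1}\cdot d_i)=d_i^\rho\circ\tilde\rho_i$. I verify it on a basis element $e_j$: the left side gives $\rho(\rho^{-1}(d_{kj}))=d_{kj}$ in each coordinate, and the right side gives $d_i(e_j)$. This handles both $\rho=\sigma$, giving $\tau^{1}$, and $\rho=\tau$, giving $\tau^{-1}$.

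Induction on $|n|$ then finishes the proof. If $\tau^{n}P_\bullet$ is a minimal resolution, apply the $n=\pm1$ case to it; this is legitimate because that case holds for every minimal resolution. Then use $\tau\cdot(\tau^n\cdot d_i)=\tau^{n+1}\cdot d_i$, which holds entrywise. The case $n=0$ is trivial.

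Minimality is preserved at each step: the entries of $\rho^{\pm1}\cdot d_i$ lie in $A_{\geq1}$ because $\rho$ is graded. The augmentation $P_0=A\to\Bbbk_A$ is unchanged, since $\tilde\rho_0$ composed with the augmentation is again the augmentation. There is no real obstacle beyond bookkeeping. The one point needing care is checking that the twisted augmentation and the identification $\Bbbk^\rho_A=\Bbbk_A$ are compatible, so that one gets a resolution of $\Bbbk_A$ itself and not merely of an isomorphic module.
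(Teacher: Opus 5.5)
Your proof is correct and is essentially the paper's argument. The paper also uses diagram (\ref{com.ppp}): the maps $\tilde{\sigma}_\bullet$ give an isomorphism from $\tau P_\bullet(-m)$ onto the minimal resolution $P^\sigma_\bullet(-m)$. You additionally make explicit three points the paper leaves implicit: the iteration to arbitrary $n\in\mathbb{Z}$ (taking $\rho=\tau$ for negative $n$), the fact that the shift and $f$ play no role, and the compatibility of the augmentation.
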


Let $s_i = \rank_A P_i$, and let $I_{s_i} \in \mathbb{M}_{s_i}(A)$ be the identity matrix. The map
$$
\zeta^1:= (\zeta^1_i)_{i\in \mbZ} = (fI_{s_i})_{i\in \mbZ} = (\gamma_i \circ  \tilde{\sigma}_i )_{i\in \mbZ}: \tau P_\bullet(-m) \rightarrow P_\bullet
$$
of complexes is a chain map over the left multiplication $f \cdot : \Bbbk_A \to \Bbbk_A, \; a \mapsto fa = 0$. 
Which is to say, we have a commutative diagram
$$
\xymatrix{\tau P_\bullet(-m) \ar[d] \ar[r]^-{\zeta^1} & P_\bullet \ar[d] \\\Bbbk_A \ar[r]^-{0} & \Bbbk_A}
$$
where vertical maps are the augmentation maps. 
Thus $\zeta^1$ is homotopic to zero by comparison theorem \cite[Theorem 6.16]{Ro09}.  So there is a 
sequence of morphisms $c^1 = (c^1_i:  P_i(-m) \rightarrow P_{i+1} )_{ i \in \mathbb{Z} }$ in $\GrM A$ such that $\zeta^1_i = c^1_{i-1} (\tau \cdot d_i)+d_{i+1} c^1_i$.  

\begin{lemma} \label{lem.zeta2}
The  sequence of morphisms
$$
\zeta^2 := (\zeta^2_{i})_{i\in \mbZ} = (-c^1_{i+1}  (\tau \cdot c^1_{i}))_{i\in \mbZ}: \tau^2 P_\bullet(-2m) \rightarrow P_\bullet[-2]
$$
is a map of complexes. 
\end{lemma}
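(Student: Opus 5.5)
The plan is a direct matrix computation: check that $\zeta^2$ commutes with the differentials, using only the homotopy identity for $c^1$, its image under $\tau$, and the way $f$ commutes past entries.

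Since $P_\bullet[-2]$ has differential $(-1)^2 d = d$, and $\tau^2 P_\bullet(-2m)$ has differential $\tau^2\cdot d$, I must show for every $i$
$$
d_{i+2}\,\zeta^2_i \;=\; \zeta^2_{i-1}\,(\tau^2\cdot d_i).
$$
Throughout I regard every morphism between free modules as a matrix over $A$ acting by left multiplication on column vectors, so composition is matrix multiplication. Two elementary facts will be used.

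\begin{itemize}
\item[(a)] Since $\tau$ is a ring automorphism, $\tau\cdot(MN) = (\tau\cdot M)(\tau\cdot N)$ for composable matrices $M,N$.
\item[(b)] From $ff=\sigma(f)f$ and regularity we get $\sigma(f)=f$, hence $\tau(f)=f$. Also, for any matrix $M$ over $A$, $f\,(\tau\cdot M) = M f$ entrywise, because $f\tau(a)=\sigma(\tau(a))f=af$.
\end{itemize}

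The key steps are as follows.

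\begin{itemize}
\item[(1)] Take the homotopy identity at index $i+1$, namely $d_{i+2}c^1_{i+1} = f I - c^1_{i}(\tau\cdot d_{i+1})$. Multiply it on the right by $-(\tau\cdot c^1_i)$ to get
$$
d_{i+2}\zeta^2_i = -f(\tau\cdot c^1_i) + c^1_i(\tau\cdot d_{i+1})(\tau\cdot c^1_i).
$$
\item[(2)] Apply $\tau$ to the homotopy identity at index $i$, namely $fI = c^1_{i-1}(\tau\cdot d_i) + d_{i+1}c^1_i$. By (a) and $\tau(f)=f$ this gives
$$
(\tau\cdot d_{i+1})(\tau\cdot c^1_i) = fI - (\tau\cdot c^1_{i-1})(\tau^2\cdot d_i).
$$
\item[(3)] Substitute (2) into (1):
$$
d_{i+2}\zeta^2_i = -f(\tau\cdot c^1_i) + c^1_i f - c^1_i(\tau\cdot c^1_{i-1})(\tau^2\cdot d_i).
$$
\item[(4)] By (b), the first two terms cancel. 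The remaining term is exactly $\zeta^2_{i-1}(\tau^2\cdot d_i)$.
\end{itemize}

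Degrees will be checked along the way: $c^1_i$ raises internal degree by $m$ relative to the shift, so $\zeta^2_i$ is a degree-zero map $P_i(-2m)\to P_{i+2}$. At the ends, for small or negative $i$, the modules are zero and the identity is trivial.

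The main obstacle is bookkeeping rather than ideas. I have to make sure that the matrix convention matches the paper's composition conventions, in particular that the homotopy $\zeta^1_i = c^1_{i-1}(\tau\cdot d_i)+d_{i+1}c^1_i$ is read as a matrix identity with $fI$ acting by left multiplication. I also have to verify that $\tau$ applied entrywise is compatible with products, which is (a); this is what lets me twist the homotopy identity in step (2). The cancellation $f(\tau\cdot c^1_i)=c^1_i f$ in step (4) is the one genuinely non-formal input.
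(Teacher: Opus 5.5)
Your proof is correct and follows essentially the paper's argument. The paper uses the same three ingredients: the homotopy identity $\zeta^1 = c^1(\tau\cdot d) + d c^1$, its $\tau$-twist, and the commutation $f(\tau\cdot c^1_i) = c^1_i f$ read off from diagram (\ref{twist}); it merely runs the computation from $\zeta^2_i(\tau^2\cdot d_{i+1})$ to $d_{i+3}\zeta^2_{i+1}$ rather than in your direction. Two small remarks. Your explicit check that $\sigma(f)=f$ (so $\tau\cdot\zeta^1=\zeta^1$) supplies a step the paper uses tacitly. At $i=0$ the identity $d_2\zeta^2_0=0$ is not trivial, but your steps (1)--(4) already cover it once you set $c^1_{-1}=0$ and $d_0=0$.
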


\begin{proof}
We have
\begin{align*}
	c^1_{i + 1} (\tau \cdot c^1_i) (\tau^2 \cdot d_{i + 1})
	& = c^1_{i + 1} \left( \zeta^1_{i+1} - (\tau \cdot d_{i + 2}) (\tau \cdot c^1_{i + 1} ) \right) \\
	& = c^1_{i + 1} \zeta^1_{i+1} - \left( \zeta^1_{i+2} - d_{i + 3} c^1_{i + 2} \right) (\tau \cdot c^1_{i + 1}) \\
	& =\left(c^1_{i + 1} \zeta^1_{i+1} - \zeta^1_{i+2} (\tau \cdot c^1_{i + 1})\right) + d_{i + 3} c^1_{i + 2} (\tau \cdot c^1_{i + 1}) \\
	& = d_{i + 3} c^1_{i + 2} (\tau \cdot c^1_{i + 1}).
\end{align*} 
The last equality follows from the commutative diagram (\ref{twist}).
\end{proof}

Consider the map  $\overline{\zeta^2_0}: {\rm Coker}(\tau^2\cdot d_1)\rightarrow {\rm Coker}(d_3)$ induced by the commutative diagram
$$
\xymatrix{
\tau^2 P_\bullet(-2m)_1 \ar[r]^-{\zeta^2_1} \ar[d]_-{\tau^2\cdot d_1} &   P_\bullet[-2]_1 \ar[d]^-{d_3} \\
\tau^2 P_\bullet(-2m)_0 \ar[r]^-{\zeta^2_0} &  P_\bullet[-2]_0. 
}
$$ 
By Lemma \ref{lem.zeta2}, $\overline{\zeta^2_0} $ is a zero map. Thus $(\zeta^2_i)_{i\geq 0}$ is a chain map over $\overline{\zeta^2_0} = 0$, then by comparison theorem, there is a sequence of morphisms 
$
c^2= (c^2_i:  \tau^2 P_\bullet(-2m)_i \rightarrow P_\bullet[-2]_{i+1})_{ i \in \mathbb{Z} }
$
in $\GrM A$, 
with $c^2_i = 0$ for $i<0$, 
such that $\zeta^2_i = c^2_{i - 1} (\tau^2 \cdot d_i)+d_{i + 1} c^2_i$ for all $i \in \mathbb{Z}$.

\begin{lemma}\label{R0}
For any integer $n \geq 1$, there is a chain map  $\zeta^n: \tau^n P_\bullet(-nm) \rightarrow P_\bullet[-2n+2]$, and sequences of morphisms $c^i = (c^i_l: \tau^i P_\bullet(-im)_l \rightarrow P_\bullet[-2i+2]_{l+1})_{l \in \mathbb{Z}}$ in $\GrM A$ for $1 \leq i \leq n$ such that 
$$
\zeta^n = \begin{cases} 
(fI_{s_i})_{i\in \mbZ} = c^1 (\tau \cdot d)+d c^1 & \text{if } n = 1\\
-\sum_{0 < i < n}c^{i} (\tau^i \cdot c^{n - i}) = c^n  (\tau^n \cdot d)  + d c^n & \text{if } n > 1 \end{cases}
$$
where $s_i = \rank_A P_i$. 

\end{lemma}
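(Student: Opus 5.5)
We argue by induction on $n$, following the pattern of Lemma \ref{lem.zeta2} and the construction of $c^2$ just after it. The cases $n=1$ and $n=2$ are already done: $\zeta^1=(fI_{s_i})$ and $c^1$ come from the comparison theorem, and $\zeta^2$, $c^2$ are built from Lemma \ref{lem.zeta2}. Suppose $n\geq 3$ and that $c^1,\dots,c^{n-1}$ have been constructed so that
\[
\zeta^k = c^k(\tau^k\cdot d)+dc^k \quad \text{for } 1\le k\le n-1,
\]
where $\zeta^k$ is given by the formula in the statement. Define $\zeta^n := -\sum_{0<i<n} c^i(\tau^i\cdot c^{n-i})$. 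In each summand, $\tau^i\cdot c^{n-i}$ maps $\tau^nP_\bullet(-nm)_l$ into $\tau^iP_\bullet(-im)_{l+2(n-i)-1}$, and $c^i$ then raises the homological index by $2i-1$. So $\zeta^n$ has homological degree $2n-2$ and internal degree $0$, consistent with a map $\tau^nP_\bullet(-nm)\to P_\bullet[-2n+2]$.

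\textbf{Chain map property.} We must show $d\,\zeta^n = \pm\,\zeta^n(\tau^n\cdot d)$, with the sign dictated by the differential $(-1)^{2n-2}d=d$ on the shifted complex. Expand $d\,c^i(\tau^i\cdot c^{n-i})$ using $dc^i=\zeta^i-c^i(\tau^i\cdot d)$. Then apply $\tau^i$ to the identity for $c^{n-i}$, namely
\[
(\tau^i\cdot d)(\tau^i\cdot c^{n-i}) = \tau^i\cdot\zeta^{n-i} - (\tau^i\cdot c^{n-i})(\tau^n\cdot d).
\]
This uses that $\tau\cdot(-)$ is multiplicative on matrices, since $\tau$ is a ring automorphism. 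Summing over $i$ gives
\[
d\zeta^n - \zeta^n(\tau^n\cdot d) = -\sum_{0<i<n}\Bigl(\zeta^i(\tau^i\cdot c^{n-i}) - c^i(\tau^i\cdot \zeta^{n-i})\Bigr).
\]
There are two kinds of contributions to cancel:
\begin{itemize}
\item[(a)] The terms with $i=1$ or $n-i=1$, which involve $\zeta^1=fI$. Here $fI\,(\tau\cdot c^{n-1}) = c^{n-1}\zeta^1$ after moving $f$ past a matrix, which is exactly the commutativity of diagram (\ref{twist}), as in the last step of Lemma \ref{lem.zeta2}. So $\zeta^1(\tau\cdot c^{n-1})$ cancels against $c^{n-1}(\tau^{n-1}\cdot\zeta^1)$.
\item[(b)] The terms with $i,n-i\geq 2$. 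Substitute $\zeta^k=-\sum_{j}c^j(\tau^j\cdot c^{k-j})$. Both $\zeta^i(\tau^i\cdot c^{n-i})$ and $c^i(\tau^i\cdot\zeta^{n-i})$ become sums of triple products $c^a(\tau^a\cdot c^b)(\tau^{a+b}\cdot c^{c})$ with $a+b+c=n$. Each triple appears once from each side with opposite signs, by an associativity/reindexing argument like that for an $A_\infty$-type Maurer–Cartan equation.
\end{itemize}
Hence $\zeta^n$ is a chain map.

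\textbf{Constructing $c^n$.} As for $c^2$, consider the map induced by $\zeta^n$ on the cokernels in degree $0$:
\[
\mathrm{Coker}(\tau^n\cdot d_1)=\Bbbk_A(-nm)\longrightarrow \mathrm{Coker}(d_{2n-1}) .
\]
For $n\ge 2$ it is zero, because $\zeta^n$ commutes with the differentials and the target complex $P_\bullet[-2n+2]_{\geq 0}$ is exact in degrees $\ge 1$. More precisely, $\zeta^n$ truncated to degrees $\ge0$ is a chain map from a projective resolution into an acyclic complex (an exact complex ending at $\mathrm{Coker}(d_{2n-1})$, and even $\zeta^n_0$ lands in the image of $d_{2n-1}$). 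The comparison theorem, in the form used for $c^2$, then yields $c^n$ with $c^n_l=0$ for $l<0$ and $\zeta^n=c^n(\tau^n\cdot d)+dc^n$. The cleanest way to verify that the induced map vanishes is to repeat the argument after Lemma \ref{lem.zeta2}: $d_{2n-1}\zeta^n_0 = \zeta^n_{-1}(\ldots)=0$ because $\zeta^n_{-1}=0$. So $\zeta^n_0$ lands in $\ker d_{2n-2}=\operatorname{im} d_{2n-1}$, giving $c^n_0$ by projectivity, and one proceeds inductively up the resolution.

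\textbf{Main obstacle.} The main difficulty is the sign and index bookkeeping in the cancellation (b). The careful point is to verify that the relation $fI\cdot(\tau\cdot X)=X\cdot fI$ (diagram (\ref{twist})) is the only noncommutativity input needed. I would first check the case $n=3$ explicitly, then organize the general case as a double-sum reindexing.
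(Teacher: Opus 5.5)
Your proposal is correct and follows essentially the same route as the paper. You define $\zeta^n:=-\sum_{0<i<n}c^i(\tau^i\cdot c^{n-i})$, expand $d\zeta^n$ using the homotopy identities for $c^1,\dots,c^{n-1}$, and cancel $\zeta^1(\tau\cdot c^{n-1})$ against $c^{n-1}(\tau^{n-1}\cdot\zeta^1)$ via (\ref{twist}), using $\tau(f)=f$. The remaining terms die by reindexing triple products $c^{j}(\tau^{j}\cdot c^{k})(\tau^{j+k}\cdot c^{l})$ with $j+k+l=n$, and $c^n$ then comes from the comparison theorem.

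There are two bookkeeping slips, neither of which affects the argument:
\begin{itemize}
\item[(1)] Split off only those two $\zeta^1$-terms in (a). For $n\ge 3$ the terms $c^1(\tau\cdot\zeta^{n-1})$ and $\zeta^{n-1}(\tau^{n-1}\cdot c^1)$ do not involve $\zeta^1$, so they do not belong in (a). They must go into (b), where they supply exactly the triples with $j=1$ and $l=1$. With (b) restricted to $i,n-i\ge 2$ as you wrote it, the two sides would not match.
\item[(2)] The degree-$0$ check should read $d_{2n-2}\zeta^n_0=0$, not $d_{2n-1}\zeta^n_0=0$.
\end{itemize}
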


\begin{proof}
We have discussed the cases $n=1, 2$. We will show the rest by induction. 

Assume chain maps $\zeta^j : \tau^j P_\bullet(-jm) \rightarrow P_\bullet[-2j+2]$, 
and sequences of morphisms 
$$
c^i = (c^i_l: \tau^i P_\bullet(-im)_l \rightarrow P_\bullet[-2i+2]_{l+1})_{l \in \mathbb{Z}}
$$ 
have been defined for all $1< j < n$, $ 1 \leq i < n$, where $n>2$,
such that 
$$
\zeta^j=-\sum_{0 < i < j}c^{i}\circ (\tau^i \cdot c^{j - i}) = c^j\circ (\tau^j \cdot d) +  d\circ c^j.
$$ 
Let $\zeta^n := -\sum_{0 < i < n}c^{i}\circ (\tau^i \cdot c^{n - i})$. 
Then
\begin{align*}
d \zeta^n =& - \sum_{0<i<n}  dc^i (\tau^i \cdot c^{n - i})
= \sum_{0<i<n} \left(c^i (\tau^i \cdot d)  (\tau^i \cdot c^{n - i}) - \zeta^i (\tau^i \cdot c^{n - i})\right) \\
=& \sum_{0<i<n} \left( c^i (\tau^i \cdot \zeta^{n - i}) - c^i (\tau^i \cdot c^{n - i})  (\tau^n \cdot d) - \zeta^i (\tau^i \cdot c^{n - i}) \right) \\
=& \sum_{0<i<n} \left( c^i (\tau^i \cdot \zeta^{n - i})- \zeta^i (\tau^i \cdot c^{n - i}) \right) - \sum_{0<i<n} c^i (\tau^i \cdot c^{n - i})  (\tau^n \cdot d)  \\
=& \sum_{0<i<n} \left( c^i (\tau^i \cdot \zeta^{n - i})- \zeta^i (\tau^i \cdot c^{n - i}) \right)
+ \zeta^n  (\tau^n \cdot d). 
\end{align*}
On the other hand, 
\begin{align*}
&\sum_{0<i<n} \left( c^i (\tau^i \cdot \zeta^{n - i})- \zeta^i (\tau^i \cdot c^{n - i}) \right) 
= \sum_{0<i<n}  c^i (\tau^i \cdot \zeta^{n - i})- \sum_{0<i<n} \zeta^i (\tau^i \cdot c^{n - i})  
\\
=& \sum_{0<i<n-1}  c^i (\tau^i \cdot \zeta^{n - i})- \sum_{1<i<n} \zeta^i (\tau^i \cdot c^{n - i})  
+ \left(c^{n-1}(\tau^{n-1} \cdot \zeta^1) - \zeta^1 (\tau \cdot c^{n-1})\right) 
\\
=& \sum_{0<i<n-1}  c^i (\tau^i \cdot \zeta^{n - i})- \sum_{1<i<n} \zeta^i (\tau^i \cdot c^{n - i}) 
&\text{(by (\ref{twist}))}
\\
=& -\sum_{0<i<n-1} \sum_{0<t<n - i} c^i(\tau^i \cdot c^{t}) (\tau^{t + i}\cdot c^{n - i - t}) 
+ \sum_{1<i<n}\sum_{0<l<i}c^{l} (\tau^{l} \cdot c^{i - l})  (\tau^{i} \cdot c^{n - i})   \\
=& 0. 
\end{align*}
Thus $\zeta^n : \tau^n P_\bullet(-nm) \rightarrow P_\bullet[-2n+2]$ is a chain map, and so $(\zeta^n_i)_{i\geq 0}$ is a chain map over $\overline{\zeta^n_0}=0: {\rm Coker}(\tau^n\cdot d_1)\rightarrow {\rm Coker}(d_{2n-1})$ which is induced by the commutative diagram
$$
\xymatrix{
\tau^n P_\bullet(-nm)_1 \ar[r]^-{\zeta^n_1} \ar[d]_-{\tau^n \cdot d_1} &   P_\bullet[-2n+2]_1 \ar[d]^-{d_{2n-1}} \\
\tau^n P_\bullet(-2m)_0 \ar[r]^-{\zeta^n_0} &  P_\bullet[-2n+2]_0. 
}
$$
Then there exists a sequence of morphisms 
$c^n = (c^n_l : \tau^n P_\bullet(-nm)_l \rightarrow P_\bullet[-2n+2]_{l+1})_{l\in \mathbb{Z}}$ such that $\zeta^n = c^n  (\tau^n \cdot d)  + d c^n$ by comparison theorem.
\end{proof}

\begin{lemma}\label{R1}
Keep the notation as in Lemma \ref{R0}. For $n\in\mathbb{N}$,
\[
\sum_{0 \leq i \leq n} c^i (\tau^i \cdot c^{n - i}) =
\begin{cases}
0,   & \text{if } n \geq 2, \\
\zeta^1, & \text{if } n = 1.
\end{cases}
\]
\end{lemma}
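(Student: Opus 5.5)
The identity is essentially a repackaging of Lemma \ref{R0} once we fix a convention for the index $0$. The sum in the statement runs over $0 \le i \le n$, but only $c^i$ for $i \geq 1$ were constructed. Since the formula for $n=1$ must read $c^0(\tau^0\cdot c^1)+c^1(\tau\cdot c^0)=\zeta^1 = c^1(\tau\cdot d)+dc^1$, the natural convention is $c^0 := d$, the differential of $P_\bullet$ (note $\tau^0\cdot d = d$ and $\tau\cdot c^0 = \tau\cdot d$). I will make this convention explicit at the start and check it is compatible with the degree bookkeeping: $c^0$ maps $P_\bullet$ to $P_\bullet[2]$ in the sense of index-lowering by one, matching the pattern $c^i: \tau^iP_\bullet(-im)_l \to P_\bullet[-2i+2]_{l+1}$ with $i=0$.

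Next I split into the two cases of the statement. For $n=1$, the sum is $d\,c^1 + c^1(\tau\cdot d)$, which is exactly $\zeta^1$ by the defining homotopy relation $\zeta^1 = c^1(\tau\cdot d)+dc^1$ from Lemma \ref{R0}. For $n\ge 2$, I separate the boundary terms $i=0$ and $i=n$ from the interior ones:
$$
\sum_{0\le i\le n} c^i(\tau^i\cdot c^{n-i}) = d\,c^n + c^n(\tau^n\cdot d) + \sum_{0<i<n} c^i(\tau^i\cdot c^{n-i}),
$$
using $\tau^0\cdot c^n = c^n$ and $\tau^n\cdot c^0=\tau^n\cdot d$. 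By Lemma \ref{R0} the first two terms give $\zeta^n$, while the interior sum equals $-\zeta^n$ by the definition of $\zeta^n$ for $n>1$. Hence the total is $0$. The $n=0$ case, if included in ``$n\in\mathbb N$'', is $c^0c^0 = d^2=0$, consistent with the first branch; I would remark on this.

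The only real obstacle is not computational but interpretive. I must make sure the convention $c^0=d$ is the intended one, and that signs from the shifted complexes $P_\bullet[-2i+2]$ (differential $(-1)^{2i-2}d = d$, so no sign appears) do not introduce discrepancies. Since all shifts are even, the differentials agree with $d$ and the argument goes through verbatim.
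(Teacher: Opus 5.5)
Your proof is correct and is essentially the paper's. The paper also uses the convention $c^0 = d$, though it states it only after the lemma, just before defining $T_\bullet$. It treats $n=1$ as immediate from $\zeta^1 = c^1(\tau\cdot d) + dc^1$, and for $n \geq 2$ it moves the interior sum in $-\sum_{0<i<n} c^i(\tau^i\cdot c^{n-i}) = c^n(\tau^n\cdot d) + dc^n$ from Lemma~\ref{R0} to the other side. Your remarks on the $n=0$ case ($d^2=0$) and on the even shifts introducing no signs are correct, and the paper leaves both implicit.
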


\begin{proof}
The case $n = 1$ is immediate.
If $n\geq 2$, then by Lemma \ref{R0}, we have
$$
-\sum_{0 < i < n}c^{i} (\tau^i \cdot c^{n - i}) = c^n (\tau^n \cdot d)  + d c^n,
$$ 
hence  $\sum_{0 \leq i \leq n} c^i (\tau^i \cdot c^{n - i}) = 0$. 
\end{proof}

For $l \geq 0$, let $e_l$ denote a generator of the free $B$-$B$-bimodule $Be_l$, and for $l \neq t$, $Be_l$ will be considered as distinct from $Be_t$. Set  $c^0 = d$.
Define a sequence 
\begin{equation} \label{miniB}
T_\bullet : \ \ \ \cdots \to T_i \xrightarrow{\partial_i} \cdots \to T_2 \xrightarrow{\partial_2} T_1 \xrightarrow{\partial_1} T_0 \to 0,
\end{equation}
where
$
T_i = \bigoplus\limits_{ 0\leq k \leq \left\lfloor \frac{i}{2}\right\rfloor }P_{i-2k}(-km)  \otimes_A B e_{2k},
$
and 
\[
\partial_i
=\begin{pmatrix}
c^0_{i}\otimes \theta_{00} &  c^1_{i-2}\otimes \theta_{10}  &  c^2_{i-4}\otimes \theta_{20}  &  c^3_{i-6}\otimes \theta_{30}  & \cdots & c^{t_i}_{i-2t_i}\otimes \theta_{t_{i}0}  \\
0 & (\tau \cdot c^0_{i-2})\otimes \theta_{11} & (\tau \cdot c^1_{i-4}) \otimes \theta_{21} & (\tau \cdot c^2_{i-6}) \otimes \theta_{31} & \cdots & (\tau \cdot c^{t_i-1}_{i-2t_i})\otimes \theta_{t_{i}1}  \\
0 & 0 & (\tau^2 \cdot c^0_{i-4}) \otimes \theta_{22} & (\tau^2 \cdot c^1_{i-6}) \otimes \theta_{32} & \cdots & (\tau^2 \cdot c^{t_i-2}_{i-2t_i}) \otimes \theta_{t_{i}2} \\
0 & 0 & 0 & (\tau^3 \cdot c^0_{i-6}) \otimes \theta_{33} & \cdots & (\tau^3 \cdot c^{t_i-3}_{i-2t_i}) \otimes \theta_{t_{i}3} \\
\vdots & \vdots & \vdots & \vdots & \ddots & \vdots
\end{pmatrix}
 \]
with $t_i= \left\lfloor \frac{i}{2}\right\rfloor$ and $\theta_{l_1l_2}: Be_{2l_1} \rightarrow Be_{2l_2}$, $e_{2l_1}\mapsto e_{2l_2}$.

Let
$$
\vartheta = (\vartheta_k)_{k \geq 0} = (v_k \otimes e_{2k})_{k\geq 0} \in T_i = \bigoplus_{\left\lfloor \frac{i}{2}\right\rfloor \geq k \geq 0}P_{i-2k}(-km)  \otimes_A B e_{2k}.
$$
Then $\vartheta_k = 0$ in $P_{i-2k}(-km)  \otimes_A B e_{2k}$ if and only if there is $v' \in P_{i-2k}(-km)$ such that $v_k = v'f$. We define the {\it support} of $\vartheta$ as 
$\supp (\vartheta): = \{j\in \mbN \mid \vartheta_j\neq 0 \}$. Then $\vartheta = 0$ precisely when its support is empty. 

\begin{theorem}\label{T1}
Let $A$ be a connected graded algebra, $f \in A_m$ a regular normal element with $m \geq 1$, and $B = A/(f)$. 
Let $P_\bullet = (P_i, d_i) \xrightarrow{\sim} \Bbbk_A$ be a minimal resolution. 
Then the sequence $T_\bullet = (T_i, \partial_i)$  defined in (\ref{miniB}) is a free resolution of $\Bbbk_B$. 
If in addition $A$ is Koszul and $m\geq 2$, then $T_\bullet\xrightarrow{\sim} \Bbbk_B$ is a minimal resolution.
\end{theorem}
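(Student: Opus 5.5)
The plan has three parts: check that $T_\bullet$ is a complex, prove exactness by a filtration argument in the spirit of Shamash, and then read off minimality from degrees.

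\emph{$T_\bullet$ is a complex.} Write $\partial_i$ as an upper triangular block matrix whose $(l_2,l_1)$ entry, for $l_1\ge l_2$, is $(\tau^{l_2}\cdot c^{l_1-l_2})\otimes\theta_{l_1l_2}$. The $(l_3,l_1)$ block of $\partial_{i-1}\partial_i$ is then
\[
\sum_{l_2}\bigl(\tau^{l_3}\cdot c^{l_2-l_3}\bigr)\bigl(\tau^{l_2}\cdot c^{l_1-l_2}\bigr)\otimes\theta_{l_1l_3}
=\tau^{l_3}\cdot\Bigl(\sum_{0\le j\le N} c^{j}\bigl(\tau^{j}\cdot c^{N-j}\bigr)\Bigr)\otimes\theta_{l_1l_3},
\qquad N=l_1-l_3 .
\]
Here I use that $\tau$ acts entrywise and multiplicatively on matrices. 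By Lemma~\ref{R1}, for $N\ge2$ the inner sum is $0$. For $N=0$ it is $d\,d=0$. For $N=1$ it equals $\zeta^1=fI$, which becomes $0$ after applying $-\otimes_A B$. Hence $\partial^2=0$. Since $\varepsilon$ factors through $T_0=P_0\otimes_A B=B$, the augmentation $T_0\to\Bbbk_B$ is $\varepsilon\otimes B$.

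\emph{Exactness.} Filter $T_\bullet$ by the index $k$, keeping the summands with $k\le K$; this is a subcomplex because $\partial$ is upper triangular. The associated graded pieces are
\[
\tau^k P_\bullet(-km)\otimes_A B\,[-2k].
\]
Since $f$ is regular, $0\to A(-m)\xrightarrow{f\cdot}A\to B\to 0$ is a free resolution of $B$ as a left $A$-module. Therefore
\[
H_j\bigl(\tau^kP_\bullet\otimes_A B\bigr)=\operatorname{Tor}^A_j(\Bbbk,B),
\]
which is $\Bbbk$ for $j=0$, is $\Bbbk(-m)$ for $j=1$, and vanishes for $j\ge2$ (multiplication by $f$ kills $\Bbbk$). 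I will then use the spectral sequence of this filtration, which converges because in each homological degree the filtration is finite. The class of $\Bbbk(-m)$ in $H_1$ of layer $k$ is represented by $v\otimes e$ with $d_1(v)=f\cdot(\text{generator})$. The $N=1$ component of $\partial$, namely $c^1$, maps it onto the $H_0$-generator of layer $k-1$ via the identity $\zeta^1=c^1(\tau\cdot d)+dc^1$. So the $d^1$-differential $H_1(\text{layer }k)\to H_0(\text{layer }k-1)$ is an isomorphism for $k\ge1$, and only $H_0$ of layer $0$, which is $\Bbbk_B$, survives. This establishes $T_\bullet\xrightarrow{\sim}\Bbbk_B$.

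The main obstacle is making this connecting map precise and verifying the grading. Rather than use spectral sequences, I would probably argue directly on elements, using the support notion $\supp(\vartheta)$ and induction on $\max\supp(\vartheta)$. Take a cycle $\vartheta$ and let $k$ be its top index. Then $\vartheta_k$ is a cycle in layer $k$. Using the homology computation, modify $\vartheta$ by a boundary to lower $k$. The case $i-2k=1$ is exactly the one needing $\zeta^1=fI$ together with the regularity of $f$, which is used to cancel $f$ in an identity of the form $v'f=\cdots$.

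\emph{Minimality.} Assume $A$ is Koszul, so each $P_j$ is generated in degree $j$ and each $d$ is linear. The map $c^n_l$ goes from $P_l(-nm)$ to $P_{l+2n-1}$, so its entries have degree
\[
(l+2n-1)-(l+nm)=n(2-m)-1,
\]
which is negative when $m\ge2$ and $n\ge1$. Such entries must be zero unless they lie in $A_{\ge1}$, and negative degree forces them to vanish. So the components of $\partial$ with $N\ge1$ are $0$ when $m\ge3$. In all cases (including $m=2$) every entry of $\partial$ lies in $B_{\ge1}$: entries from $c^0=d$ are linear, and entries with $N\ge1$ have degree $\le-1$ and hence vanish. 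Thus $\partial(T)\subset T B_{\ge1}$, which gives minimality.
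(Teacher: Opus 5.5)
Your skeleton matches the paper's: Lemma~\ref{R1} for $\partial^2=0$, reduction along the index $k$, and a degree count for minimality. Your filtration by $k$, with layers $\tau^kP_\bullet(-km)\otimes_AB\,[-2k]$ whose homology is $\operatorname{Tor}^A_*(\Bbbk,B)$, is also correct.

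\textbf{Exactness: the connecting map is backwards.} Since $c^1_j\colon P_j(-m)\to P_{j+1}$, the $N=1$ block sends internal degree $j$ of layer $k$ to internal degree $j+1$ of layer $k-1$. So $d^1$ goes $H_0(\text{layer }k)\to H_1(\text{layer }k-1)$. It sends $1\otimes e_{2k}$ to $(\tau^{k-1}\cdot c^1_0)(1)\otimes e_{2k-2}$, which represents the $H_1$-generator because $d_1c^1_0=\zeta^1_0=f$ and $\tau(f)=f$. Your map $H_1(\text{layer }k)\to H_0(\text{layer }k-1)$ cannot exist as stated. It drops total degree by $3$ and shifts internal degree by $2m$. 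Were it an isomorphism, the survivor would be $H_1(\text{layer }0)=\Bbbk(-m)$, not $\Bbbk_B$.

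For the same reason, in your element-wise fallback, ``use the homology computation'' only handles $i-2k\ge2$. Two further cases need separate arguments.
\begin{itemize}
\item If $i-2k=1$, the top component can carry a nontrivial $H_1$-class. Write $(\tau^k\cdot d_1)(v_k)=v'f$. You must first subtract $\partial_{i+1}(\tau(v')\otimes e_{2k+2})$, a boundary coming from the layer above. After this the top component satisfies $(\tau^k\cdot d_1)(v_k)=0$ and can be removed as usual.
\item If $i-2k=0$ and $k\ge1$, the layer-$(k-1)$ component of $\partial_i\vartheta=0$ is needed. Together with $d_1c^1_0=f$ and regularity of $f$, it forces the $H_0$-class of $v_k$ to vanish. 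This is the paper's Case~2, and it is where $f$ actually gets cancelled; you attached that mechanism to the wrong case.
\end{itemize}
The paper's own Case~1 also passes over $i-2l=1$ too quickly: there $v'\in P_0$ need not lie in $\im(\tau^l\cdot d_1)$. For example, take $A=\Bbbk[x]$, $f=x^2$, $\vartheta=x\otimes e_0\in T_1$. The subtraction above repairs this.

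\textbf{Minimality: sign error.} The map $c^n_l$ goes from $P_l(-nm)$, generated in degree $l+nm$, to $P_{l+2n-1}$, generated in degree $l+2n-1$. So its entries have degree $n(m-2)+1\ge1$, not $n(2-m)-1$. This is the paper's inequality $-im-j<-2i-j+1$, and it puts every entry of $\partial$ in $B_{\ge1}$, which is what minimality needs. Your claim that the blocks with $N\ge1$ vanish is false, since $d_1c^1_0=f\neq0$. It would also contradict your own exactness argument: without those blocks $T_\bullet$ splits as the direct sum of its layers and has nonzero homology in every degree.
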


\begin{proof}
We divide the proof into $3$ steps. 

(i) By  Lemma \ref{R1}, we can check that $\partial_i\circ \partial_{i+1} = 0$
for $i\geq 1$. Thus $T_\bullet$ is a complex. 

(ii) We now show
$\ker \partial_i = \im \partial_{i+1}$ for $i\geq 1$. 
Let 
$
\vartheta = (v_k \otimes e_{2k})_{k\geq 0} \in T_i,
$
such that $\partial_i(\vartheta) = 0$. Let $l = \max\,\supp(\vartheta)$.
Then there are two cases we need to discuss: 

Case 1. Either $i$ is odd, or $i$ is even and $l \leq  \frac{i}{2} - 1$. 
Then $\partial_i(\vartheta) = 0$ implies that 
$$
\left((\tau^{l} \cdot d_{i-2l})(v_{l})\right) \otimes e_{2l} = 0
$$
in $P_{i- 2l - 1}(-l m ) \otimes_A B e_{2l}$. It follows that there is $v' \in P_{i-2l-1}(- l m)$
such that $(\tau^{l} \cdot d_{i-2l}) (v_{l}) = v'f$. Then
\begin{align*}
(\tau^{l} \cdot d_{i - 2l - 1}) (v') f =& (\tau^{l} \cdot d_{i - 2l - 1}) (v' f ) \\
=& (\tau^{l} \cdot d_{i - 2l - 1}) \circ (\tau^{l} \cdot d_{i - 2l}) (v_{l}) \\
=& (\tau^{l} \cdot (d_{i - 2l - 1} d_{i - 2l})) (v_{l}) = 0
\end{align*}
in $P_{i-2l-2}(-l m )$. Thus $(\tau^{l} \cdot d_{i - 2l - 1}) (v') = 0$. Since $P_\bullet(-lm)$ is a minimal resolution, 
there exists $v'' \in P_{i-2l}(-l m )$ such that $(\tau^{l} \cdot d_{i - 2l}) (v'') = v'$. 
It follows that 
$
(\tau^{l} \cdot d_{i - 2l} )(v_{l} - v''f) = 0,$
and so there exists $x \in P_{i - 2l + 1 }(-lm)$ such that $(\tau^{l} \cdot d_{i - 2l + 1}) (x) =v_{l}  - v''f$.
Let $\omega^{} = (w_k)_{k\geq 0} \in T_{i+1}$ where 
$$
w_k = \begin{cases} x \otimes e_{2l} & \text{if } k = l,  \\ 0 & \text{otherwise}. \end{cases}
$$
Then $\max\,\supp(\vartheta - \partial_{i+1}(\omega^l)) < l_0:=l$. Similarly, we can construct $\omega^{l_1} \in T_{i+1}$ such that the maximal value of $\vartheta - \partial_{i+1}(\omega^l_0) - \partial_{i+1}(\omega^{l_1})$ is smaller. Inductively, we have
$\omega^{l_0}, \dots, \omega^{l_s} \in T_{i+1}$ such that $\vartheta - \sum_{j=1}^{s} \partial_{i+1}(\omega^{l_j}) = \vartheta - \partial_{i+1}\left(\sum_{j=1}^{s}  \omega^{l_j} \right) = 0$. 

Case 2. $i$ is even and $l=\frac{i}{2}$. Then $\partial_i(\vartheta) = 0$ implies that 
\begin{align*}
&((\tau^{l-1} \cdot d_{i - 2(l-1)}) (v_{l-1})) \otimes e_{2(l-1)} + ((\tau^{l-1} \cdot c^1_{i - 2l}) (v_{l})) \otimes e_{2(l-1)} \\
=&((\tau^{l-1} \cdot d_{2}) (v_{l-1}) + (\tau^{l-1} \cdot c^1_{0}) (v_{l})) \otimes e_{i-2}
= 0.
\end{align*}
in $P_{1}(- (l -1)m ) \otimes_A B e_{i-2}$. Here, $v_{l-1}$ may be $0$.
Then 
$
(\tau^{l-1} \cdot d_{2}) (v_{l-1}) + (\tau^{l-1} \cdot c^1_{0}) (v_{l}) = v'f
$
for some $v' \in P_{1}(- (l -1)m )$. This further leads to 
\begin{align*}
(\tau^{l-1} \cdot d_1) (v')f = (\tau^{l-1} \cdot d_{1}) \circ ( \tau^{l-1} \cdot c^1_{0} )(v_{l}) 
= \tau^{l-1} \cdot (d_1 c^1_{0} )(v_{l}) 
= (\tau^{l-1} \cdot \zeta^1_{0})(v_{l}) = fv_{l}.
\end{align*}
It follows that $v_{l} = (\tau^{l} \cdot d_1)(v')$. 
Let $\mu = (u_k)_{k\geq 0} \in T_{i+1}$ where
$$
u_k = \begin{cases} v' \otimes e_{l} & \text{if } k = l, \\ 0 & \text{otherwise}. \end{cases}
$$
Then  $\max\,\supp(\vartheta - \partial_{i+1} (\mu)) < l$. Thus we can use the method in Case 1 to construct $\omega \in T_{i+1}$ such that 
$
\vartheta - \partial_{i+1}  (\mu) - \partial_{i+1}  (\omega) = 
\vartheta - \partial_{i+1}  (\mu + \omega)
= 0. 
$

(iii) By applying $ - \otimes_A Be_0$ to the exact sequence  $P_1 \xrightarrow{d_1}  P_0 \xrightarrow{\varepsilon}  \Bbbk_A \to 0$, we get an exact sequence
$$
T_1 \xrightarrow{\partial_1} T_0 \xrightarrow{\widetilde{\varepsilon}} k_B \to 0
$$
where $\widetilde{\varepsilon} = \varepsilon \otimes_A Be_0$. Thus $T_\bullet$ is a free resolution of $\Bbbk_B$ with augmentation 
$\widetilde{\varepsilon}: T_\bullet \to \Bbbk_B$. 

If in addition $A$ is Koszul and $m\geq 2$, then $-im-j< (-2i+2)-(j+1)=-2i-j+1$.  This implies that $c^i_j: \tau^i P_\bullet(-im)_j \rightarrow P_\bullet[-2i+2]_{j+1}$ is a matrix over $A_{\geq 1}$ for all $i,j$. Thus, $T_\bullet$ is a minimal resolution. 
\end{proof}

If $A$ is not Koszul or $m = 1$, the free resolution $T_\bullet$ of $\Bbbk_B$ provided by Theorem \ref{T1} may fail to be minimal.

\begin{example}
Let $A = \Bbbk[x]$ with $\deg x = m \geq 1$, and $B = A/(x) = \Bbbk$. We have a free resolution
$$
\cdots \rightarrow \Bbbk(-2m)\xrightarrow{1} \Bbbk(-2m) \xrightarrow{0} \Bbbk(-m)\xrightarrow{1}\Bbbk(-m) \xrightarrow{0} \Bbbk \to \Bbbk_B \rightarrow 0
$$
provided by Theorem \ref{T1}.
This resolution is not minimal for every $m \in \mathbb{N}$.
\end{example}

\section{Main results} \label{sec-mainres}

In this section, $A = \Bbbk\langle x_1,\dots,x_n \rangle/(f_1,\dots, f_r) $ is a Koszul algebra, and $B = A/(f)$, where $f\in A_2$ a regular normal element. Then $B$ is also a Koszul algebra.

By Theorem \ref{T1}, starting from a linear resolution $P_\bullet = (P_i, d_i)$ of $\Bbbk_A$,
we obtain a linear resolution of $\Bbbk_B$:
\begin{align*}
T_\bullet :  \ \ \  \cdots \to T_i \xlongrightarrow{\partial_i}  \cdots \to T_2 \xlongrightarrow{\partial_2}  T_1 \xlongrightarrow{\partial_1} T_0 = B \to  0. 
\end{align*}
For convenience, we write
$$
\partial_i 
=\begin{pmatrix}
d_i& c^1_{i-2}  &  c^2_{i-4}  & \cdots & \ \ \ c^{t_i}_{i-2t_i}  \\
0 & \tau \cdot d_{i-2} & \tau \cdot c^1_{i-4}  & \cdots & \tau \cdot c^{t_i-1}_{i-2t_i}  \\
0 & 0 & \tau^2 \cdot d_{i-4}  & \cdots & \tau^2 \cdot c^{t_i-2}_{i-2t_i} \\
\vdots & \vdots & \vdots & & \vdots
\end{pmatrix}
$$
where $t_i= \left\lfloor \frac{i}{2}\right\rfloor$, $\tau$ is the inverse of normalizing automorphism of $f$, and the maps $c^s_l$ are as defined in Section \ref{sec-resquo}.
Similarly, from a linear resolution $Q_\bullet = (Q_i, h_i)\xrightarrow{\sim} {_A}\Bbbk$, we obtain a linear resolution $K_\bullet = (K_i, \delta_i)\xrightarrow{\sim}{_B}\Bbbk$. 

\begin{remark}
For any $s\in\mathbb{Z}$, $\tau^sP_\bullet$ is also a linear resolution of $\Bbbk_A$ by Lemma \ref{lem-tplin}. Thus there exists a chain isomorphism 
$(\phi_i)_{i\in\mathbb{Z}} :  \tau^s P_\bullet \to P_\bullet$. 
Therefore,
$
\rank_\Bbbk(\tau^s \cdot d_i)_p=\rank_\Bbbk(d_i)_p, \ \forall p\in \mathbb{P}^{n-1}.
$
\end{remark}

The following lemma is direct.

\begin{lemma} \label{lem-rkbti}
The following hold.
\begin{itemize}
\item[(1)] $\rank_{B} T_i = \sum_{j = 0}^{\left\lfloor \frac{i}{2}\right\rfloor} \rank_A P_{i - 2j}$, for $i \geq 0$.
\item[(2)] $\rank_\Bbbk (\partial_i)_p \geq \sum_{j = 0}^{\left\lfloor \frac{i-1}{2}\right\rfloor} \rank_\Bbbk (d_{i - 2j})_p$, $\forall p \in \mathbb{P}^{n-1}$, for $i \geq 0$.
\end{itemize}
If in addition, $A$ is right point-exact up to degree $\ell \geq 1$. Then
$$
\rank_\Bbbk (\partial_i)_p \geq  \sum_{j=1}^i (-1)^{j+1}\rank_B T_{i-j},  \ \forall p \in X_A
$$
where $0 \leq i \leq \ell +1$.
\end{lemma}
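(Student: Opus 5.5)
Part (1) is a direct count. By the definition in (\ref{miniB}), $T_i = \bigoplus_{0\le k\le \lfloor i/2\rfloor} P_{i-2k}(-2k)\otimes_A Be_{2k}$, and each summand $P_{i-2k}(-2k)\otimes_A B$ is a free $B$-module of rank $\rank_A P_{i-2k}$. Summing these ranks gives (1).

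For part (2), I use the block upper-triangular form of $\partial_i$. Its diagonal blocks are $d_i, \tau\cdot d_{i-2}, \tau^2\cdot d_{i-4},\dots$. When $i$ is even, the last column of $\partial_i$ corresponds to $P_0(-i)$, and its diagonal entry is $\tau^{i/2}\cdot d_0=0$. So only the blocks $\tau^j\cdot d_{i-2j}$ with $i-2j\ge 1$, that is $0\le j\le \lfloor (i-1)/2\rfloor$, carry information. I will use the elementary fact that the rank of a block upper-triangular matrix is at least the sum of the ranks of its diagonal blocks. To see this, pick a nonsingular square submatrix of maximal size inside each diagonal block. Together these give a square submatrix of $(\partial_i)_p$ that is block upper triangular with invertible diagonal blocks, hence invertible. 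Evaluating at $p$ and using the Remark that $\rank_\Bbbk(\tau^s\cdot d_i)_p=\rank_\Bbbk(d_i)_p$ then gives (2). There is one bookkeeping point: the rows and columns of the blocks of $(\partial_i)_p$ must align with the summands of $T_{i-1}$ and $T_i$. This holds because row block $k$ and column block $k$ both correspond to the index $e_{2k}$.

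For the final claim, fix $p\in X_A$ and $0\le i\le \ell+1$. Lemma \ref{lem-rkdi} gives $\rank_\Bbbk(d_{i-2j})_p=\sum_{t=1}^{i-2j}(-1)^{t+1}s_{i-2j-t}$ for every $1\le i-2j\le \ell+1$, and this range covers all indices appearing in (2). Substituting into (2), the lower bound becomes
\[
\sum_{j=0}^{\lfloor (i-1)/2\rfloor}\sum_{t=1}^{i-2j}(-1)^{t+1}s_{i-2j-t}.
\]
Substituting $u=i-2j-t$, the coefficient of $s_u$ equals $(-1)^{i-u+1}$ times the number of $j$ with $0\le j$ and $2j\le i-u-1$, which is $\lfloor (i-u-1)/2\rfloor+1$.

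On the other side, (1) gives
\[
\sum_{j=1}^i(-1)^{j+1}\rank_B T_{i-j}=\sum_{j=1}^i(-1)^{j+1}\sum_{k} s_{i-j-2k}.
\]
Here the coefficient of $s_u$ is $(-1)^{i-u+1}$ times the number of pairs $(j,k)$ with $j\ge 1$, $k\ge0$ and $j+2k=i-u$. For each admissible $k$ the index $j$ is determined, so this count is the number of $k$ with $0\le 2k\le i-u-1$. The two coefficients agree, so the identity holds and the inequality follows.

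The main obstacle is this parity and index bookkeeping. I will check it directly on small cases such as $i=2,3$ to confirm that the ranges of $j$ and $k$ really match.
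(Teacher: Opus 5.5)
Your proposal is correct and is the argument the paper leaves implicit when it calls the lemma ``direct''. Part (1) comes from counting the free summands of $T_i$. Part (2) comes from the block upper-triangular shape of $\partial_i$ together with the Remark that $\rank_\Bbbk(\tau^s\cdot d_i)_p=\rank_\Bbbk(d_i)_p$. The final inequality follows by substituting Lemma~\ref{lem-rkdi} into (2) and checking that each $s_u$ has coefficient $(-1)^{i-u+1}\bigl(\lfloor (i-u-1)/2\rfloor+1\bigr)$ on both sides. That bookkeeping checks out; for example, for $i=3$ both sides equal $s_2-s_1+2s_0$.
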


\begin{proposition} \label{prop-leq2}
If $A$ is semi-standard and point-exact up to degree $2$. Then $B$ satisfies the (G1) condition. 
\end{proposition}

\begin{proof}
By Lemma \ref{lem-E1}, $A$ satisfies the (G1) condition, and $r + 1 \geq n$. Let $(E,\sigma)$ be the geometric pair of $A$. 
It suffices to show that $B$ is semi-standard and point-exact at degree $1$.

Let $q \in X_{\partial_2}$, then there exists $p \in \mathbb{P}^{n-1}$ such that 
$$
(\partial_1)_p (\partial_2)_q = (f_1(p,q), \dots, f_r(p,q), f(p,q)) = 0. 
$$
Since $X_{\partial_2} \subset X_{d_2} = E$, we have $p \in X_{h_2} = E$ such that $\sigma(p) = q$.
Then $(\delta_3)_p (\delta_2)_q = 0$ since $\delta_3 \delta_2 = 0$ as a matrix over $B$. 
On the other hand, by (left version of) Lemma \ref{lem-rkbti}, 
\begin{equation*}
\begin{aligned}
\rank_\Bbbk (\delta_2)_q &\geq \rank_B K_1 - \rank_B K_0 = n - 1, \\ 
\rank_\Bbbk (\delta_3)_p &\geq  \rank_B K_2 - \rank_B K_1 + \rank_B K_0. 
\end{aligned}
\end{equation*}
Then by Lemma \ref{lem-rk}, we have
\begin{equation*}
\begin{aligned}
\rank_\Bbbk(\delta_2)_q \leq \rank_B K_2 - (\rank_B K_2 - \rank_B K_1 + \rank_B K_0) =  n-1. 
\end{aligned}
\end{equation*}
It follows that $\rank_\Bbbk (\delta_2)_q = n-1$. Since $\delta_2 \in \mathbb{M}_{(r + 1) \times n}(B_1)$ and $r + 1 \geq n$, $q \in X_{\delta_2}$. 
Therefore $X_{\partial_2} \subset X_{\delta_2}$. Similarly, we can prove $X_{\delta_2} \subset X_{\partial_2}$, and $\rank_\Bbbk(\partial_2)_p = n- 1$ for all $p \in X_{\delta_2}$. The result follows.
\end{proof}

\begin{remark}
Let $A$ and $B$ as in Proposition \ref{prop-leq2},  
and let $(E, \sigma)$ and $(E_B, \sigma_B)$ be the geometric pairs of $A$ and $B$ respectively. 
Then $E_B \subset E$, and $\sigma_B = \sigma|_{E_B}$. 
\end{remark}

\begin{theorem} \label{thm-main}
If $A$ is semi-standard and point-exact up to degree $\ell \geq 2$. Then $B$ is also semi-standard and point-exact up to  degree $\ell$. 
\end{theorem}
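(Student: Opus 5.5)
The plan is to reduce everything to rank inequalities at points of the geometric variety of $B$, and to squeeze each rank between a lower bound coming from $A$ and an upper bound coming from $\partial_i\partial_{i+1}=0$.

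\emph{Step 1 (degree $1$ and semi-standardness).} Since $\ell \geq 2$, Proposition \ref{prop-leq2} applies, so $B$ satisfies the (G1) condition. By Lemma \ref{lem-E1}, $B$ is then semi-standard and point-exact at degree $1$. Let $(E_B,\sigma_B)$ be its geometric pair, so that $E_B = X_B = {_B}X$ and $\sigma_B\in \Aut_\Bbbk E_B$. By the remark following Proposition \ref{prop-leq2}, $E_B \subset E = X_A$ and $\sigma_B=\sigma|_{E_B}$. It remains to prove right (and, symmetrically, left) point-exactness of $B$ at degrees $2\le i\le \ell$.

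\emph{Step 2 (lower bounds).} Write $a_i := \sum_{j=1}^i (-1)^{j+1}\rank_B T_{i-j}$, so that $a_i + a_{i+1} = \rank_B T_i$. Since $A$ is right point-exact up to degree $\ell$ and $X_B\subset X_A$, the last part of Lemma \ref{lem-rkbti} gives
\[
\rank_\Bbbk(\partial_i)_p \ge a_i \quad \text{for all } p\in E_B \text{ and } 0\le i\le \ell+1 .
\]

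\emph{Step 3 (upper bounds from $\partial_i\partial_{i+1}=0$).} The entries of each $\partial_i$ lie in $B_1=V$, because $T_\bullet$ is linear by Theorem \ref{T1}. Lift these entries to $V\subset T(V)$. Each entry of the product $\partial_i\partial_{i+1}$ is then an element of $V^{\otimes 2}$ lying in the quadratic relation space $R_B$ of $B$. Every element of $R_B$ is a bilinear form vanishing on $\mathcal V(R_B)=\{(p,\sigma_B(p))\mid p\in E_B\}$. Hence
\[
(\partial_i)_p\,(\partial_{i+1})_{\sigma_B(p)}=0 \quad \text{for all } p\in E_B .
\]
This is the same mechanism as in Lemma \ref{lem-vrdh}. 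Checking this evaluation identity carefully is the step I expect to need the most care. One must make sure the lifted products really land in $R_B$ and that evaluation is well-defined up to scalars, so that matrix products over $B$ translate into vanishing bilinear forms.

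\emph{Step 4 (squeezing).} Given Step 3, Lemma \ref{lem-rk}(1) with $t=\rank_\Bbbk(\partial_i)_p \ge a_i$ yields
\[
\rank_\Bbbk(\partial_{i+1})_{\sigma_B(p)} \le \rank_B T_i - \rank_\Bbbk(\partial_i)_p \le \rank_B T_i - a_i = a_{i+1}.
\]
If $t$ exceeds the matrix sizes, the bound is trivial. Since $\sigma_B(p)\in E_B$, Step 2 gives the reverse inequality $\rank_\Bbbk(\partial_{i+1})_{\sigma_B(p)} \ge a_{i+1}$. So all inequalities in the chain are equalities, and in particular $\rank_\Bbbk(\partial_i)_p = a_i$. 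Because $\sigma_B$ is surjective on $E_B$, we obtain $\rank_\Bbbk(\partial_{i+1})_q = a_{i+1}$ for all $q\in E_B$ and all $1\le i\le \ell$. Together with degree $1$, this gives $\rank_\Bbbk(\partial_i)_q=a_i$ for $1\le i\le \ell+1$. By Lemma \ref{lem-rkdi} applied to $B$, $B$ is right point-exact up to degree $\ell$.

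\emph{Step 5 (left version).} The left version is identical. Use $K_\bullet$ and the $\delta_i$, the left form of Lemma \ref{lem-rkbti}, and the identity $(\delta_{i+1})_p(\delta_i)_{\sigma_B(p)}=0$, with the roles of the two factors in $\mathcal V(R_B)$ swapped as in Lemma \ref{lem-vrdh}.
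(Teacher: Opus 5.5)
Your proposal is correct and follows essentially the same route as the paper. Like the paper, you get (G1) for $B$ from Proposition \ref{prop-leq2} and Lemma \ref{lem-E1}, take the lower bound $\rank_\Bbbk(\partial_i)_p\ge a_i$ on $E_B\subset X_A$ from Lemma \ref{lem-rkbti}, squeeze it against the upper bound that Lemma \ref{lem-rk} gives from $(\partial_i)_p(\partial_{i+1})_{\sigma_B(p)}=0$ (the paper writes this as $(\partial_{i-1})_{\sigma_B^{-1}(p)}(\partial_i)_p=0$), and conclude with Lemma \ref{lem-rkdi}; your Step 3 just makes explicit the evaluation identity that the paper takes for granted from $\partial_{i-1}\partial_i=0$ over $B$.
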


\begin{proof}
By Lemma \ref{lem-E1} and Proposition \ref{prop-leq2}, both $A$ and $B$ satisfy the (G1) condition, thus $B$ is semi-standard and point-exact at degree 1. Let $(E, \sigma)$ and $(E_B, \sigma_B)$ be the geometric pairs of $A$ and $B$ respectively. We give the proof that $B$ is right point-exact up to degree $\ell$. The proof of left version is similar. 

Let $p \in E_B$. 
By Lemma \ref{lem-rkbti}, we have
$$
\rank_\Bbbk(\partial_i)_p \geq  \sum_{j=1}^i (-1)^{j+1}\rank_B T_{i-j}, \ \text{where} \ 0 \leq i \leq \ell+1.
$$
Since $\partial_{i-1} \partial_i = 0$ as a matrix over $B$, we have $(\partial_{i-1})_{\sigma_B^{-1}(p)} (\partial_i)_{p} = 0$. Then by Lemma \ref{lem-rk},  
$$
\rank_\Bbbk(\partial_i)_p \leq \rank_B T_{i-1} - \left( \sum_{j=1}^{i-1} (-1)^{j+1}\rank_B T_{(i - 1)-j} \right) = \sum_{j=1}^i (-1)^{j+1}\rank_B T_{i-j}. 
$$
Thus $\rank_\Bbbk(\partial_i)_p =  \sum_{j=1}^i (-1)^{j+1}\rank_B T_{i-j}$. By Lemma \ref{lem-rkdi}, the desired conclusion follows.
\end{proof}

If $f\in A_2$ is not a regular normal element, then $B=A/(f)$ is not necessarily semi-standard.

\begin{example}
	Let $A = k\langle x,y,z \rangle /(f_1,f_2,f_3)$ where 
	$$
	f_1 = xy+yx+2z^2, \, f_2 = yz + zy + 2x^2, \, f_3 = zx + xz + 2y^2.
	$$ 
	Then $A$ is a $3$-dimensional quantum polynomial algebra.
	Let $f = xy \in A_2$ which is not a normal element, and $B = A/(f)$. 
 Then $(f_1,f_2,f_3,f) = (x,y,z)M = N(x,y,z)^t$, where 
	$$
	M = \begin{pmatrix}
		y & 2x & z & y\\
		x  &  z &  2y & 0 \\
		2z  &  y  &  x  &  0
	\end{pmatrix},\ \ 
	N = \begin{pmatrix}
		y  &  x  & 2z\\
		2x  &  z  &  y \\
		z  &  2y  &  x \\
		0  &  x  &  0
	\end{pmatrix}.
	$$
This implies that 
	$$
X_B = X^3_M = \mcZ(10xyz-2(y^3+x^3+z^3), y(xy-2z^2), y(zx-2y^2), y(x^2-4yz)) \subset \mbP^2
	$$ 
	and 
	$$
{_B}X = X^3_N = \mcZ(10xyz-2(y^3+x^3+z^3), x(xy-2z^2), x(yz-2x^2), x(4xz-y^2))\subset \mbP^2.
	$$ 
We can see that $(1\mathpunct{:}0\mathpunct{:}-1)\in X_B$, but  $(1\mathpunct{:}0\mathpunct{:}-1)\notin {_B}X$. It follows that $ X_B \neq {_B}X$, and so $B$ is not semi-standard.
\end{example}

\section{Skew polynomial algebras}\label{sec6}

Skew polynomial algebras, as defined below, form an important class of quantum polynomial algebras \cite{BDL, R,V10}. 

\begin{definition} \label{defn-spa}
A quadratic algebra $A$ is called a {\it skew polynomial algebra of dimension $n$} if 
$$
A = \Bbbk \langle x_1, \dots, x_n \rangle /(x_j x_i - q_{ij} x_i x_j)_{1 \leq i,j \leq n} 
$$
with $\deg x_i = 1$, and all entries of $\mathcal{Q} =(q_{ij}) \in \mathbb{M}_{n}(\Bbbk)$ are nonzero such that $q_{ii} = 1$ and $q_{ji} = q_{ij}^{-1}$. 
\end{definition}

Let $A$ be an $n$-dimensional skew polynomial algebra defined by a matrix $\mathcal{Q} =(q_{ij}) \in \mathbb{M}_{n}(\Bbbk)$. Let
$$
\mathbf{q} =  \left(q_{1(n+1)}, \dots, q_{n(n+1)}\right)^t \in \mathbb{M}_{n \times 1}(\Bbbk)
$$
where $q_{i(n+1)} \neq 0$ for $i = 1, \dots,n$. Let
$$
\mathcal{Q}^{\wedge, \mathbf{q}} : = \begin{pmatrix} \mathcal{Q} & \mathbf{q} \\ \tilde{\mathbf{q}} & 1\end{pmatrix} \in \mathbb{M}_{(n+1) \times (n+1)}(\Bbbk)
$$
where 
$$
\tilde{\mathbf{q}} : = \left(q_{1(n+1)}^{-1}, \dots,  q_{n(n+1)}^{-1}\right) \in \mathbb{M}_{1 \times n}(\Bbbk). 
$$
Then there is an $(n+1)$-dimensional skew polynomial algebra $A^{\wedge, \mathbf{q}}$ defined by the matrix $Q^{\wedge, \mathbf{q}}$,
which can be realized as an Ore extension $A^{\wedge, \mathbf{q}} = A[x_{n+1}; \mu^{\mathbf{q}}]$,
where
$$
\mu^{\mathbf{q}} := \diag \{ q_{1(n+1)}, \dots, q_{n(n+1)} \} \in \Aut A. 
$$

Let $P_\bullet = (P_i,d_i) \xrightarrow{\sim} \Bbbk_A$ be a linear resolution. Since $A$ is an $n$-dimensional quantum polynomial algebra, we have $P_i = A(-i)^{\oplus C^n_i}$, where  $C^n_i = \binom{n}{i}$ is the binomial coefficient.  By applying $- \otimes_A A^{\wedge, {\mathbf{q}}}$ to $P_\bullet$, we obtain a free resolution $Z_\bullet  \xrightarrow{\sim} \Bbbk[x_{n+1}]_{A^{\wedge, \mathbf{q}}}$. 
Let 
$
\varphi: Z_\bullet(-1) \to Z_\bullet
$
be the chain map over the left multiplication 
$$
x_{n+1} \cdot: \Bbbk[x_{n+1}]_{A^{\wedge, \mathbf{q}}}(-1) \to \Bbbk[x_{n+1}]_{A^{\wedge, \mathbf{q}}}. 
$$
Then $\varphi_0 = x_{n+1}$ and $\varphi_i = 0$ for $i > n$. We call $\varphi$ a {\it $\mathbf{q}$-extended map of $P_\bullet$}.

Define $P^{\wedge, \mathbf{q}}_{\bullet} = (P^{\wedge, \mathbf{q}}_i, d^{\wedge, \mathbf{q}}_i)$ to be the mapping cone of 
$\varphi$, i.e.,
\begin{align}\label{list:resolution}
P^{\wedge, \mathbf{q}}_i 
= (A^{\wedge, \mathbf{q}})(-i)^{\oplus C^{n}_{i-1}} \oplus (A^{\wedge, \mathbf{q}})(-i)^{\oplus C^{n}_{i}} = (A^{\wedge, \mathbf{q}})(-i)^{\oplus C^{n+1}_i}, \  
d^{\wedge, \mathbf{q}}_{i} = \begin{pmatrix}
-d_{i-1} &  0   \\
\varphi_{i-1} &  d_i 
\end{pmatrix} 
\end{align}
for $i \in \mathbb{Z}$. 
Then $P^{\wedge, \mathbf{q}}_{\bullet} \xrightarrow{\sim} \Bbbk_{A^{\wedge, \mathbf{q}}}$ is a minimal resolution (\cite[Lemma 2.4]{SG21}). We call the complex $P^{\wedge, \mathbf{q}}_{\bullet}$ a {\it cone extension} of $P_\bullet$ by $\mathbf{q}$. 

Consider the matrix $\mathcal{Q} = (q_{ij})$ associated to $A$. Let 
\begin{align*}
\mathbf{q}_i = \left(q_{1(i+1)}, \dots q_{i(i+1)}\right)^t \in \mathbb{M}_{(i+1)\times 1}(\Bbbk), \ 1\leq i \leq n-1,
\end{align*}
and 
$$
C_\bullet: \ \ \ 0\rightarrow \Bbbk[x_1](-1) \xrightarrow{x_1\cdot} \Bbbk[x_1]\rightarrow 0
$$
the minimal resolution of $\Bbbk_{R}$ where $R = \Bbbk[x_1]$. Then the iterated cone extension
$$
\widetilde{P}_\bullet : = (((C^{\wedge, \mathbf{q}_1}_\bullet)^{\wedge, \mathbf{q}_2}) \cdots)^{\wedge, \mathbf{q}_{n-1}}
$$
is a linear resolution of $\Bbbk_A$. We call $\widetilde{P}_\bullet \xrightarrow{\sim} \Bbbk_A$ the {\it canonical resolution}. 

\begin{lemma} 	\label{lem-spa}
Keep the notation as above.
If $P _\bullet = \widetilde{P}_\bullet \xrightarrow{\sim} \Bbbk_A$ is the canonical resolution, then the $\mathbf{q}$-extended map $\varphi: Z_\bullet(-1) \to Z_\bullet$ of $P_\bullet$ has a diagonal presentation associated to $x_{n+1}$, which is to say, 
$$
\varphi_i = x_{n+1} \diag\{ a_{i1}, a_{i2},\dots,a_{iC^n_i} \}, \ 0 \leq i \leq n, 
$$
where $0 \neq a_{ij} \in \Bbbk$ for all $i,j$.  
\end{lemma}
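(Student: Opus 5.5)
We will prove the statement by induction on $n$, following the iterated cone construction of $\widetilde{P}_\bullet$. Since $\widetilde{P}_\bullet$ for the $n$-dimensional algebra is itself a cone extension of the canonical resolution of the $(n-1)$-dimensional skew polynomial algebra $A' = \Bbbk\langle x_1,\dots,x_{n-1}\rangle/(\cdots)$, we can describe its differentials explicitly. Concretely, we index the basis of $P_i = A(-i)^{\oplus C^n_i}$ by $i$-subsets $S \subset \{1,\dots,n\}$. By induction on the cone formula (\ref{list:resolution}), each $d_i$ sends the generator $e_S$ to $\sum_{j \in S} \epsilon_{S,j}\, e_{S\setminus\{j\}} x_j$, where the $\epsilon_{S,j}$ are nonzero scalars (signs times products of the $q$'s). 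In other words, $\widetilde{P}_\bullet$ is a $q$-twisted Koszul complex. The base case $C_\bullet$ is trivially of this form, and the inductive step uses the diagonal form of $\varphi$ supplied by the lemma itself at the previous stage. For this reason it is cleanest to prove both statements simultaneously: the monomial shape of the differentials and the diagonal shape of the extended maps.

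For the main step, we construct $\varphi$ directly rather than via the comparison theorem; any chain map over $x_{n+1}\cdot$ is acceptable, since the statement asserts existence of such a presentation. In $A^{\wedge,\mathbf q}$ we have $x_j x_{n+1} = q_{j(n+1)}^{-1}\, x_{n+1} x_j$ for $j \le n$, up to the convention for $q$. We set $\varphi_i(e_S) = a_S\, x_{n+1} e_S$ with $a_\emptyset = 1$ and solve for the scalars. The chain map condition $d_i \varphi_i = \varphi_{i-1} d_i$ on $e_S$ reads
\[
\sum_{j\in S} \epsilon_{S,j}\, a_S\, x_{n+1}\, e_{S\setminus\{j\}} x_j \;=\; \sum_{j \in S} \epsilon_{S,j}\, a_{S\setminus\{j\}}\, x_{n+1}\, e_{S\setminus\{j\}} x_j .
\]
Here we must be careful about the sides on which the matrix entries act (column vectors over $A$, with left multiplication by matrix entries). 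Moving $x_{n+1}$ past $x_j$ produces the factor $q_{j(n+1)}^{\pm1}$, so the condition becomes $a_S = q_{j(n+1)}^{\pm1}\, a_{S\setminus\{j\}}$ for all $j\in S$. This is solved consistently by
\[
a_S = \prod_{j\in S} q_{j(n+1)}^{\pm 1},
\]
which is nonzero because all $q_{ij}\neq 0$. Consistency holds because the product is independent of the order in which elements are removed. We set $\varphi_i = 0$ for $i>n$, which is forced since $Z_i = 0$ there.

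Finally, we verify that the cone built from this diagonal $\varphi$ again has differentials of the monomial form, with $e_{S\cup\{n+1\}}$ corresponding to the first summand. This closes the induction. The main obstacle is the bookkeeping of sides and twists: whether the entries of $d_i$ act so that the correct relation $x_jx_{n+1}$ versus $x_{n+1}x_j$ appears, and whether the exponent is $+1$ or $-1$. We would fix conventions once at the base case $n=1$ (where $\varphi_0 = x_2$ and $\varphi_1 = q_{12}^{\pm1}x_2$) and check them against the relation $x_2x_1 = q_{12}x_1x_2$. Once that case is settled, the general case follows the same pattern.
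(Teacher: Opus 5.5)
Your proposal is correct, but it takes a different route from the paper's.

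\textbf{The paper's route.} The paper never writes $\widetilde{P}_\bullet$ in closed form. It works with the block description $d_i=\begin{pmatrix}-d^\vee_{i-1}&0\\ \psi_{i-1}&d^\vee_i\end{pmatrix}$ and introduces the auxiliary algebra $A^\vee[x_{n+1};\mu^{\mathbf k}]$. By the induction hypothesis, now applied with respect to the new variable $x_{n+1}$, the extended map $\theta$ of $P^\vee_\bullet$ is diagonal. The paper then sets $\varphi_i=\diag\{q_{n(n+1)}\theta_{i-1},\theta_i\}$, tensored up to $A^{\wedge,\mathbf q}$. The chain-map verification reduces to the commutation $\theta\psi=q_{n(n+1)}\psi\theta$ of two diagonal extended maps, which comes from $x_{n+1}x_n=q_{n(n+1)}x_nx_{n+1}$.

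\textbf{Your route.} You unroll this recursion. You identify $\widetilde{P}_\bullet$ as a $q$-twisted Koszul complex indexed by subsets, and solve the chain-map condition entrywise using only $x_{n+1}x_j=q_{j(n+1)}x_jx_{n+1}$. Unwinding the paper's block formula gives exactly your scalars $a_S=\prod_{j\in S}q_{j(n+1)}$, so the two constructions produce the same $\varphi$.

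\textbf{What each buys.} Your version gives explicit differentials and explicit scalars, which yields Remark \ref{rem-spa} for free. Your simultaneous induction also makes explicit a point the paper leaves tacit: the canonical resolution must be built with the diagonal extended map at every stage for the shape to persist. The paper's version avoids subset and sign bookkeeping, at the cost of invoking the induction hypothesis twice, once for $x_n$ and once for $x_{n+1}$ over $A^\vee$.

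\textbf{Two small points of bookkeeping.} First, the exponent ambiguity you left open resolves to $+1$. With the paper's conventions (matrices act on column vectors by left multiplication, $d(e_S)=\sum_{j\in S}e_{S\setminus\{j\}}\epsilon_{S,j}x_j$), compare the $(S\setminus\{j\},S)$ entries of $d_i\varphi_i$ and $\varphi_{i-1}d_i$. This gives $\epsilon_{S,j}x_j\,a_Sx_{n+1}=a_{S\setminus\{j\}}x_{n+1}\,\epsilon_{S,j}x_j$, hence $a_S=q_{j(n+1)}a_{S\setminus\{j\}}$, matching $\varphi_1=q_{12}x_2$. Your displayed identity should be read this way: as written, both sides put $x_{n+1}$ before $x_j$, so the $q$-factor never appears. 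Second, the cone step you describe does close the induction. For $|S'|=i-1$ one gets $d^{\wedge}(e_{S'\cup\{n+1\}})=-\sum_{j\in S'}e_{(S'\setminus\{j\})\cup\{n+1\}}\epsilon_{S',j}x_j+e_{S'}a_{S'}x_{n+1}$. This is again of monomial shape with nonzero coefficients.
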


\begin{proof}
We proceed by induction on the dimension of the skew polynomial algebra. 

If $n=1$, then 
\begin{equation}\label{skew1}
P_\bullet: \  \ \ 0\rightarrow \Bbbk[x_1](-1) \xrightarrow{x_1\cdot} \Bbbk[x_1]\rightarrow 0.
\end{equation} and 
$\mathbf{q} = (q_{12})$ where $0 \neq q_{12}\in \Bbbk$. Then $A^{\wedge,\mathbf{q}} = \Bbbk\langle x_1, x_2\rangle/(x_2 x_1 - q_{12}x_1 x_2)$ and 
$$
Z_\bullet: \ \ \ 0\rightarrow A^{\wedge,\mathbf{q}} (-1) \xrightarrow{x_1\cdot} A^{\wedge,\mathbf{q}}\rightarrow 0.
$$
The map $\varphi: Z_\bullet(-1) \to Z_\bullet$ is given by $\varphi_0 = x_2$ and $\varphi_1 = q_{12} x_2$, which clearly has the required diagonal from.

Assume the lemma holds for all skew polynomial algebras of dimension less than $n$.
Consider the $(n-1)$-dimensional skew polynomial algebra $A^\vee$ defined by the leading principal submatrix $\mathcal{Q}^\vee$  of $\mathcal{Q}$ of order $n-1$, i.e.,
$$
A^\vee = \Bbbk\langle x_1, \dots, x_{n-1} \rangle / (x_jx_i - q_{ij}x_ix_j)_{1 \leq i,j \leq n-1}.
$$
Let $P^\vee_\bullet \xrightarrow{\sim} \Bbbk_{A^\vee}$ be the canonical resolution,
and let 
$$
\psi: (P^\vee_\bullet \otimes_{A^\vee} A) (-1) \to P^\vee_\bullet \otimes_{A^\vee} A
$$ 
be the $\mathbf{q}_{n-1}$-extended map of $P^\vee_\bullet$ where $\mathbf{q}_{n-1} = \left(q_{1n}, \dots q_{(n-1)n}\right)^t$. 
Then $A = A^\vee[x_n; \mu^{\mathbf{q}_{n-1}}]$, and $P_\bullet = (P^\vee_\bullet)^{\wedge, \mathbf{q}_{n-1}}$.
By the induction hypothesis, $\psi$ has a diagonal presentation associated to $x_n$. 

Now set 
$$
\mathbf{k} = \left( q_{1(n+1)}, \dots, q_{(n-1)(n+1)} \right)^t \in \mathbb{M}_{(n-1)\times 1}(\Bbbk),
$$
and define $B := A^\vee[x_{n+1}; \mu^{\mathbf{k}}]$ (look out the variable).  
Then the $\mathbf{k}$-extended map
$$
\theta: (P^\vee_\bullet \otimes_{A^\vee} B) (-1) \to P^\vee_\bullet \otimes_{A^\vee}B
$$
of $P^\vee_\bullet$ is a chain map over the left multiplication $x_{n+1}\cdot: \Bbbk[x_{n+1}]_B \to \Bbbk[x_{n+1}]_B$. 
By the induction hypothesis, $\theta$ also has a diagonal presentation associated to $x_{n+1}$. 

Define a sequence of morphisms
$\varphi = (\varphi_i)_{i \in \mathbb{Z}}: Z_\bullet(-1) \to Z_\bullet$ by
\begin{align*}
\varphi_i := &
\begin{pmatrix}
q_{n(n+1)}\theta_{i-1} \otimes_B A^{\wedge, \mathbf{q}} & 0\\
0  &  \theta_i  \otimes_B A^{\wedge, \mathbf{q}}
\end{pmatrix} 
\end{align*}
for $0 \leq i \leq n$. 
Since $\theta$ is a chain map
and
$$
(\theta \otimes_{B} A^{\wedge, \mathbf{q}}) \circ (\psi \otimes_{A} A^{\wedge, \mathbf{q}}) = q_{n(n+1)} (\psi \otimes_{A} A^{\wedge, \mathbf{q}}) \circ (\theta \otimes_{B} A^{\wedge, \mathbf{q}}),
$$
the map $\varphi$ is a chain map. Moreover, $\varphi_0 = x_{n+1}$ implies that $\varphi$ is a chain map over the left multiplication $x_{n+1} \cdot: \Bbbk[x_{n+1}]_{A^{\vee,\mathbf{q}}}(-1) \to \Bbbk[x_{n+1}]_{A^{\vee,\mathbf{q}}}$. 
Thus $\varphi: Z_\bullet(-1) \to Z_\bullet$ is a $\mathbf{q}$-extended map of $P_\bullet$, which has a diagonal presentation associated to $x_{n+1}$. 
\end{proof}

\begin{remark}\label{rem-spa}
Let $A$ be an $n$-dimensional skew polynomial algebra, and
$P_\bullet = (P_i, d_i) \xrightarrow{\sim} \Bbbk_A$ the canonical resolution. By (\ref{list:resolution}) and Lemma \ref{lem-spa}, for every $i$, the entries of $d_i$ are monomials, and for each variable $x_j$, there is $0 \neq c \in \Bbbk$ such that $c x_j$ appears as an entry of $d_i$. 
\end{remark}

\begin{theorem} \label{thm-spa}
Skew polynomial algebras of dimension $\geq 2$ satisfy the point-exact condition.
\end{theorem}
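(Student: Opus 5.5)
We prove that every skew polynomial algebra $A$ of dimension $n \geq 2$ is right point-exact at every degree; the left version follows by the same argument applied to $A^{op}$, which is again a skew polynomial algebra (with matrix $\mathcal{Q}^t$). By Remark \ref{rem-asg}(4), $A$ satisfies the (G1) condition with some geometric pair $(E,\sigma)$. By Lemma \ref{lem-eqrep}, it therefore suffices to show the following: for each $p \in E$, the complex of vector spaces
\[
\cdots \to \Bbbk^{s_2} \xrightarrow{(d_2)_{\sigma(p)}} \Bbbk^{s_1} \xrightarrow{(d_1)_p} \Bbbk \to 0
\]
is exact. We take $P_\bullet = \widetilde{P}_\bullet$ to be the canonical resolution. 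By Remark \ref{rem-spa}, every entry of every $d_i$ is a scalar multiple of a single variable.

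The key structural input is the explicit description of $E$ and $\sigma$ for skew polynomial algebras (cf. \cite{V10}).

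\begin{itemize}
\item[(a)] A point $p$ with support $S=\{j : p_j\neq 0\}$ lies in $E$ iff $q_{ij}q_{jk}q_{ki}=1$ for all $i,j,k\in S$. In particular, $E$ contains every coordinate line.
\item[(b)] $\sigma$ preserves supports.
\end{itemize}

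Since $d_i$ is monomial, $(d_i)_{\sigma^{i-1}(p)}$ only involves the variables $x_j$ with $j\in S$. Each such matrix is obtained from the corresponding matrix of the Koszul-type complex of the skew polynomial subalgebra $A_S=\Bbbk\langle x_j: j\in S\rangle/(\ldots)$ by tensoring with the exterior complex in the remaining variables, which are evaluated to zero. Concretely, the iterated cone construction (\ref{list:resolution}) together with Lemma \ref{lem-spa} shows that, after permuting the variables so that $S=\{1,\dots,k\}$, the evaluated complex splits as a direct sum of copies of the evaluated complex for $A_S$, shifted in homological degree. Here we use that the cone extension by a variable $x_{j}$ with $p_j=0$ contributes $\varphi_{i}=x_j\,\diag(\cdots)$, which evaluates to $0$, so the cone becomes a direct sum $Z\oplus Z[-1]$.

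However, exactness of a direct sum of copies of the evaluated complex for $A_S$ still requires care: the augmentation $\Bbbk$ appears only once. So the correct statement to prove by induction on $n$ is the following. For $p\in E$, the complex $(\Bbbk^{s_i},(d_i)_{\sigma^{i-1}(p)})$, augmented by $\Bbbk$, is exact. This is equivalent to its homology being zero, i.e.\ it is a twisted Koszul complex at a nonzero covector. We proceed by induction along the cone extensions. Choose an index $j$ with $p_j\neq 0$ and order the variables so that $j=n$ is the last one adjoined. Then the evaluated complex is the mapping cone of $\varphi_{\mathrm{ev}}$, where $\varphi_{\mathrm{ev}}$ is evaluated at the appropriate $\sigma$-iterates. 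By Lemma \ref{lem-spa}, $\varphi_{\mathrm{ev}}$ is diagonal with entries $a_{ij}\,\sigma^{i}(p)_n\neq 0$ (here we use (b)), hence it is an isomorphism of complexes. A mapping cone of an isomorphism is acyclic. Combining this with the augmentation (which is handled in degree $0$ by $p_n\neq0$) gives exactness.

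The main obstacle is making the last step honest. Reordering the variables changes the canonical resolution. One must check that the rank conditions in Definition \ref{def-pe} are invariant under the chain isomorphism of Remark \ref{zero-wd}, which is fine because that isomorphism consists of scalar matrices. Moreover, the evaluation points $\sigma^{i-1}(p)$ must be compatible with the $\sigma$-twisting inherent in the cone. Concretely, in degree $i$ the block $\varphi_{i-1}$ gets evaluated at the point that makes the block square commute, and the commutation $d\varphi=\varphi d$ over $A$ must specialize to commutation of the evaluated matrices. That specialization holds precisely because $(p,\sigma(p))\in\mathcal{V}(R)$, i.e.\ $p_jp'_i=q_{ij}p_ip'_j$. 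I would verify this relation carefully using the diagonal form in Lemma \ref{lem-spa}, and I expect it to be the delicate bookkeeping step of the proof.
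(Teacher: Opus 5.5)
Your argument is correct, but it takes a different route from the paper's.

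The paper inducts on $n$ with the variables in a fixed order, taking the base cases $n=2,3,4$ from Proposition \ref{prop.34qec}:
for $p=(0:\cdots:0:1)$ it reads off $\rank_\Bbbk(d_i)_p=\rank_\Bbbk(\psi_{i-1})_p$ from the diagonal form of $\psi$;
for $p$ with nonzero truncation $p'$ it shows $p'\in E_{A^\vee}$ and $q'=\sigma_{A^\vee}(p')$, then bounds $\rank_\Bbbk(d_i)_p+\rank_\Bbbk(d_{i+1})_{\sigma(p)}$ from below by $C^n_i$ (block-triangular shape of $d_i$ plus the inductive hypothesis for $A^\vee$) and from above via $(d_i)_p(d_{i+1})_{\sigma(p)}=0$.

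You instead permute the variables, point by point, so that $p_n\neq 0$. You then recognise the evaluated complex as the mapping cone of $\Psi_k=(\psi_k)_{\sigma^k(p)}$, a map from the evaluated $A^\vee$-complex started at $\sigma(p)$ to the one started at $p$. Since $\Psi_k=\sigma^k(p)_n\diag(a_{k1},\dots)$ and $\sigma$ preserves supports, $\Psi$ is a chain isomorphism, so the cone is acyclic.

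Your route needs neither the inductive hypothesis for $A^\vee$ nor the base cases. It uses only three things:
Lemma \ref{lem-spa};
support preservation;
invariance of pointwise ranks under the scalar chain isomorphisms of Remark \ref{zero-wd}, which is what licenses changing the resolution from point to point.
In effect it extends the paper's special case $p=(0:\cdots:0:1)$ to every point.

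Two simplifications are available.

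First, your description (a) of $E$, the direct-sum splitting, and the word ``induction'' can all be dropped. Only (b) is used, and it follows in two lines from $p_jq_i=q_{ij}p_iq_j$. Moreover, the cone of a chain isomorphism is acyclic regardless of its source and target. Your worry about the augmentation is also unfounded, since each shifted summand would itself be acyclic.

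Second, the ``delicate bookkeeping'' is automatic. The lower-left block of the vanishing product $(d_{i-1})_{\sigma^{i-2}(p)}(d_i)_{\sigma^{i-1}(p)}$ is exactly the commutation relation for $\Psi$. That product vanishes because the entries of $d_{i-1}d_i$, computed in $T(V)$, lie in $R$, while $(\sigma^{i-2}(p),\sigma^{i-1}(p))\in\mathcal{V}(R)$.
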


\begin{proof}
We proceed by induction on the dimension of the skew polynomial algebra. For $n = 2,3,4$, the result follows form Proposition \ref{prop.34qec} (see Example \ref{exm-2dimqpa}). 

Assume the result holds for all skew polynomial algebras of dimension less that $n$. 
Let $A$, $A^\vee$, 
$P_{\bullet}  = (P_i, d_i)\xrightarrow{\sim} \Bbbk_{A}$,  
$P^\vee_{\bullet} = (P^\vee_i, d^\vee_i) \xrightarrow{\sim} \Bbbk_{A^\vee}$ 
be as in the proof of Lemma \ref{lem-spa}. 
Then $A = A^\vee[x_n; \mu^{\mathbf{q}_{n-1}}]$ and $P_\bullet = (P^\vee_\bullet)^{\wedge, \mathbf{q}_{n-1}}$ where $\mathbf{q}_{n-1} = \left(q_{1n}, \dots q_{(n-1)n}\right)^t$. 
In particular, we have
$$
d_i = 
\begin{pmatrix}
-d^\vee_{i-1} &  0   \\
\psi_{i-1} &  d^\vee_i 
\end{pmatrix}
$$
for $i \in \mathbb{Z}$, where $\psi = (\psi)_{i \in \mathbb{Z}}$ is the $\mathbf{q}_{n-1}$-extended map of $P^\vee$ which has a diagonal presentation associated to $x_n$.

By Remark \ref{rem-asg}, both $A$ and $A^\vee$ satisfy the (G1) condition. 
Let $(E_A, \sigma_A)$ and $(E_{A^\vee}, \sigma_{A^\vee})$ be the geometric pairs of $A$ and $A^\vee$ respectively. 
Let 
$$
V  = \langle x_1,\dots,x_n \rangle, \ V' = \langle x_1,\dots,x_{n-1} \rangle.
$$
Take a point $p=(p_1\mathpunct{:} \, \cdots \, \mathpunct{:} p_n)\in E_{A} \subset  \mathbb{P}(V^*)$, and let $q = \sigma_A(p) = (q_1 \mathpunct{:}  \, \cdots \,  \mathpunct{:} q_n)$. 

If $p=(0\mathpunct{:} \, \cdots \, \mathpunct{:}0\mathpunct{:}1)$, 
then since $\psi$ is a chain map, we have
$$
0 = (d^\vee_1)_p(\psi_1)_q = (\psi_0)_p(d^\vee_1)_q = (x_n)_p(d^\vee_1)_q = (d^\vee_1)_q.
$$ 
By Remark \ref{rem-spa}, $(d^\vee_1)_q = 0$ implies that $q = p$. 
Since $\psi$ has a diagonal presentation associated to $x_{n}$, we have
$$
\rank_\Bbbk (d_{i})_{p} = \rank_\Bbbk \begin{pmatrix}
    	0 & 0    \\
    	(\psi_{i-1})_{p} &  0
    \end{pmatrix} = \rank_\Bbbk (\psi_{i-1})_{p}  = C^{n-1}_{i-1}
$$ 
for all $i\geq 1$.  

Now suppose $p_i \neq 0$ for some $1\leq i \leq n-1$, then $q_i\neq 0$ for some $1\leq i\leq n-1$ by the above discussion. Let 
$$
p' = (p_1\mathpunct{:}\, \cdots \,\mathpunct{:}p_{n-1}), q' = (q_1\mathpunct{:}\, \cdots \,\mathpunct{:}q_{n-1}) \in \mathbb{P}(V'^{*})
$$
be the truncations of $p,q$ respectively. 
We have $(d^\vee_1)_{p'}(d^\vee_2)_{q'}=0$ since $(d_1)_p(d_2)_q=0$. It follows that $p'\in E_{A^\vee}$ and $q'=\sigma_{A^\vee}(p')$. By assumption, we have
\begin{align*}
\rank_\Bbbk(d_i)_p + \rank_\Bbbk(d_{i+1})_q &
\geq \rank_\Bbbk(d^\vee_i)_{p'} + \rank_\Bbbk(d^\vee_{i-1})_{p'}+\rank_\Bbbk(d^\vee_{i+1})_{q'} + \rank_\Bbbk(d^\vee_{i})_{q'}\\
& = C^{n-1}_{i-1} + C^{n-1}_i \\
& = C^n_{i}
\end{align*} 
for all $i\geq 1$. One the other hand, $\rank_\Bbbk(d_i)_p + \rank_\Bbbk(d_{i+1})_{q} \leq C^n_i $ since $(d_i)_p (d_{i+1})_q = 0$. So $\rank_\Bbbk(d_i)_p + \rank_\Bbbk(d_{i+1})_q = C^n_i$ for all $i\geq 1$. 
Thus $A$ satisfies the right point-exact condition. The left point-exact condition is verified similarly. 
\end{proof}

\section{Appendix: Smooth $\pm 1$-skew quadric surfaces}

Let $A$ be a noetherian connected graded algebra, and $\tors A$ the full subcategory of $\grmod A$ consisting of finite dimensional modules. Define a quotient category $\qgr A := \grmod A / \tors A$. Following \cite{SVdB}, we call $\qgr A$ {\it smooth} if $\gld (\qgr A) < \infty$. 

We call a graded algebra $B = A/(f)$ a {\it $\pm 1$-skew quadric surface} if 
$$
A = \Bbbk\langle x_1, x_2, x_3, x_4 \rangle/(f_1, f_2, f_3, f_4, f_5, f_6)
$$ 
is a $4$-dimensional skew polynomial algebra defined by a matrix $\mathcal{Q} = (q_{ij}) \in \mathbb{M}_4(\Bbbk)$ with $q_{ij} = \pm 1$ for all $i,j$, and $f \in A_2$ is a regular normal element. By Proposition \ref{rem-asg}, Theorem \ref{thm-main} and Theorem \ref{thm-spa}, $B$ satisfies the (G1) condition and point-exact condition. For the rest, we consider $\pm 1$-skew quadric surfaces of the form $B = A/(x_1^2 + x_2^2 + x_3^2 + x_4^2)$. 

\begin{proposition} [\cite{Ue23}] \label{thm-ske4}
For a $\pm 1$-skew quadric surface of the form $B = A/(x_1^2 + x_2^2 + x_3^2 +x_4^2)$. The following statements hold. 
\begin{enumerate}
\item $\qgr B$ is smooth. 
\item The derived category $ \mathrm{D}^{\mathrm{b}}(\qgr B) $ is equivalent to exactly one of the following categories:
\begin{itemize}
\item[Case 1.] $ \mathrm{D}^{\mathrm{b}}(\qgr \Bbbk[x_1, x_2, x_3, x_4]/(x_1^2 + x_2^2 + x_3^2 + x_4^2))$. 
\item[Case 2.] $\mathrm{D}^{\mathrm{b}}(\qgr A/(x_1^2 + x_2^2 + x_3^2 + x_4^2))$ where $A$ is defined by
\begin{equation} \label{equ-case2}
\mathcal{Q} = 
\begin{pmatrix} 
1 & -1 & -1 & 1 \\ 
-1 & 1 & -1 & -1 \\
-1 & -1 & 1 & -1 \\
1 & -1 & -1 & 1 
\end{pmatrix}. 
\end{equation}
\item[Case 3.] $\mathrm{D}^{\mathrm{b}}(\qgr A/(x_1^2 + x_2^2 + x_3^2 + x_4^2))$ where $A$ is defined by
$
\mathcal{Q} = (q_{ij}) \in \mathbb{M}_4(\Bbbk)
$
with $q_{ij} = -1$ for all $i \neq j$. 
\end{itemize}
\item For any $B$ and $B'$, the following are equivalent:
\begin{enumerate}
\item $ \GrM B \cong \GrM B' $.
\item $ \qgr B \cong \qgr B' $.
\item $ \mathrm{D}^{\mathrm{b}}(\qgr B) \cong \mathrm{D}^{\mathrm{b}}(\qgr B') $.
\end{enumerate}
\end{enumerate}
\end{proposition}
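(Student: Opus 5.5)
The plan is to follow the standard route for noncommutative quadric hypersurfaces (Smith--Van den Bergh, Mori--Ueyama). This route passes through the finite-dimensional algebra $C(A)$ attached to the pair $(A,f)$, where $A$ is a $4$-dimensional skew polynomial algebra with all $q_{ij}=\pm1$ and $f=x_1^2+x_2^2+x_3^2+x_4^2$.

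\emph{Step 1: $f$ is regular and central.} Since $q_{ij}=\pm 1$, we have $x_jx_i^2=q_{ij}^2x_i^2x_j=x_i^2x_j$. Hence $f$ is central, and it is regular because $A$ is a domain. Then $B=A/(f)$ is Koszul by Proposition~\ref{prop-kos}, and the Koszul dual $B^!$ contains a central regular element $w$ of degree $2$ dual to $f$. We set $C(A):=B^![w^{-1}]_0$, which is an $8$-dimensional algebra.

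\emph{Step 2: the $C(A)$ dictionary.} We invoke the two standard facts from Smith--Van den Bergh and Mori--Ueyama. First, $\qgr B$ is smooth if and only if $C(A)$ is semisimple. Second, there is a semiorthogonal decomposition of $\mathrm{D}^{\mathrm b}(\qgr B)$ into an exceptional sequence of length $2$ and $\mathrm{D}^{\mathrm b}(\operatorname{mod} C(A))$. In particular, $\mathrm{D}^{\mathrm b}(\qgr B)$ determines, and is determined by, the isomorphism class of $C(A)$, using the fact that derived equivalence of semisimple algebras is Morita equivalence.

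\emph{Step 3: compute $C(A)$ combinatorially.} Encode $\mathcal Q$ as a graph $G$ on the vertex set $\{1,2,3,4\}$, with an edge $ij$ exactly when $q_{ij}=1$. In the $\pm1$ case, $B^!$ is generated by the duals $y_1,\dots,y_4$ of the $x_i$. These satisfy $y_i^2=w$ and $y_iy_j=\mp y_jy_i$ according to the sign of $q_{ij}$. So $C(A)$ is a twisted Clifford-type algebra spanned by the even monomials $y_Iw^{-|I|/2}$. Its centre and semisimple decomposition can be read off from $G$: via Ueyama's point-counting, the number of simple factors equals half the number of isolated points of the point variety of $B$. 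This number can be computed from the point variety $E_B$ described in the Remark after Proposition~\ref{prop-leq2}: $E_B$ is the intersection of $E_A$ with $\mathcal Z(f)$, and $E_A$ is the union of coordinate subspaces determined by $G$ (Remark~\ref{rem-asg}(4)). Carrying out the computation, one should check that $C(A)$ is always semisimple, which gives (1). One then checks that the only possibilities are $k^8$ (two points per factor), $M_2(k)^2$, and $k^4\times M_2(k)$, or whichever three classes occur. These would match Case 1 (the commutative quadric, giving $M_2(k)^2$), Case 2 (the matrix (\ref{equ-case2})), and Case 3 (all $q_{ij}=-1$). This also shows that the three cases are mutually inequivalent, which is (2).

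\emph{Step 4: the equivalences in (3).} The implications (a)$\Rightarrow$(b)$\Rightarrow$(c) are formal. For (c)$\Rightarrow$(a), we show that graphs with the same $C(A)$ are related by Seidel switching and the other mutations of $G$. Each such operation is realised by a Zhang twist of $A$ that fixes $f$ up to rescaling of the variables, and a Zhang twist gives $\GrM B\cong\GrM B'$. The main obstacle is this last step. It requires checking, over all $\pm1$ matrices up to switching, that the invariant ``isomorphism class of $C(A)$'' separates switching classes exactly. My first attempt would be a direct enumeration of the small number of switching classes of graphs on $4$ vertices, together with an explicit twisting system for each switching move.
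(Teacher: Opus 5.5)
The paper gives no proof of this proposition; it quotes it from \cite{Ue23}. Your reconstruction breaks at its central step, because $C(A)$ cannot tell the three cases apart.

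In $B^!$ one has $y_i^2=w$ and $y_iy_j=-q_{ij}y_jy_i$. So $C(A)$ is a twisted group algebra of the group of even subsets of $\{1,2,3,4\}$, i.e.\ of $(\mathbb{Z}/2)^3$, and its commutator pairing is alternating. The radical of that pairing therefore has order $2$ or $8$, and the only possibilities are $C(A)\cong\mathbb{M}_2(\Bbbk)^{2}$ and $C(A)\cong\Bbbk^8$. In particular $\Bbbk^4\times\mathbb{M}_2(\Bbbk)$ never occurs.

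Now look at Case 2, where $q_{14}=1$ and all other $q_{ij}=-1$. Here $y_1$ and $y_4$ anticommute while every other pair commutes. The centre of $C(A)$ is then spanned by $1$ and $y_2y_3w^{-1}$, so $C(A)\cong\mathbb{M}_2(\Bbbk)\times\mathbb{M}_2(\Bbbk)$. This is exactly what the commutative quadric of Case 1 gives.

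Consequently two of your steps fail:
\begin{itemize}
\item[(i)] The assertion in Step 2 that $\mathrm{D}^{\mathrm{b}}(\qgr B)$ is determined by $C(A)$ is false. A semiorthogonal decomposition into an exceptional pair and $\mathrm{D}^{\mathrm{b}}(\operatorname{mod} C(A))$ does not determine the category without the gluing data.
\item[(ii)] Step 4 fails for the same reason. Up to permuting variables there are exactly three switching classes on four vertices. They are distinguished by the number ($0$, $2$ or $4$) of triples $\{i,j,k\}$ with $q_{ij}q_{jk}q_{ki}=-1$, and they are precisely Cases 1, 2, 3. Two of them share the same $C(A)$, so $C(A)$ cannot separate switching classes.
\end{itemize}

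Several parts of your plan do survive:
\begin{itemize}
\item[(a)] Part (1) holds, since a twisted group algebra of $(\mathbb{Z}/2)^3$ in characteristic $0$ is semisimple.
\item[(b)] Every $B$ reduces to one of the three cases by Zhang twists. Twisting by $x_u\mapsto -x_u$ and then rescaling $x_u$ by $\sqrt{-1}$ realises switching at $u$ and fixes $f$.
\item[(c)] Case 3 is separated from the other two, for example by $\operatorname{rank}K_0$, which is $10$ versus $4$.
\end{itemize}

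What is missing is the inequivalence of Cases 1 and 2. You need it for ``exactly one'' in (2) and for (c)$\Rightarrow$(a) in (3). It requires a derived invariant finer than $C(A)$, one that distinguishes $\mathrm{D}^{\mathrm{b}}(\mathbb{P}^1\times\mathbb{P}^1)$ from Case 2. This is the real content of \cite{Ue23}, and your plan has nothing in its place.

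Two auxiliary claims should also be discarded:
\begin{itemize}
\item[(i)] Your point-count formula contradicts the computations in this paper. Case 3 has $12$ isolated points, but $C(A)\cong\Bbbk^8$ has $8$ simple factors. Case 1 has no isolated points, but $C(A)$ has $2$ simple factors.
\item[(ii)] $E_B$ is not $E_A\cap\mathcal{Z}(f)$. It is $\{p\in E_A\mid f(p,\sigma(p))=0\}$. For example, the isolated points $(0:1:\pm1:0)$ of Case 2 do not lie on $\mathcal{Z}(f)$.
\end{itemize}
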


We compute the point varieties $E_B$ of algebras $B$ mentioned in Theorem \ref{thm-ske4} (2). 
\begin{itemize}
\item[Case 1.] $B = \Bbbk[x_1, x_2, x_3, x_4]/(x_1^2 + x_2^2 + x_3^2 + x_4^2)$. Since $B$ is commutative, by \cite[Lemma 2.5]{HMM23}, we have
$E_B = \mathcal{Z}(x_1^2+x_2^2+x_3^2+x_4^2)\subset \mathbb{P}^3$. 
\item[Case 2.] $B = A/(x_1^2 + x_2^2 + x_3^2 + x_4^2)$ where $A$ is defined by $\mathcal{Q}$ as in (\ref{equ-case2}).
Then
$
(f_1, f_2, f_3, f_4, f_5, f_6, f) = (x_1, x_2, x_3, x_4) M 
$
where 
$$
M = \begin{pmatrix} 
x_2 & x_3 & -x_4 & 0 & 0 & 0 & x_1 \\ 
x_1 & 0 & 0 & x_3 & x_4 & 0 & x_2 \\
0 & x_1 & 0 & x_2 & 0 & x_4 & x_3 \\
0 & 0 & x_1 & 0 & x_2 & x_3 & x_4
\end{pmatrix}.
$$
The nonzero distinct (up to scalar) $4$-minors of $M$ are
\begin{gather*}
2 x_1^2 x_2 x_3, -2 x_1 x_2^2 x_3, -2 x_1 x_2 x_3^2, -2 x_2^2 x_3 x_4,  -2 x_2 x_3^2 x_4, 2 x_2 x_3 x_4^2, 2 x_1 x_2 x_3 x_4, \\
-x_1^4 + x_1^2 x_2^2 + x_1^2 x_3^2 - x_1^2 x_4^2, -x_1^2 x_2^2 + x_2^4 - x_2^2 x_3^2 - x_2^2 x_4^2,   \\
x_1^2 x_3^2 + x_2^2 x_3^2 - x_3^4 + x_3^2 x_4^2, -x_1^2 x_4^2 + x_2^2 x_4^2 + x_3^2 x_4^2 - x_4^4, \\
x_1^3 x_4 - x_1 x_2^2 x_4 - x_1 x_3^2 x_4 + x_1 x_4^3, \\
-x_1^3 x_2 + x_1 x_2^3 + x_1 x_2 x_3^2 - x_1 x_2 x_4^2, x_1^3 x_2 - x_1 x_2^3 + x_1 x_2 x_3^2 + x_1 x_2 x_4^2, \\
-x_1^3 x_3 + x_1 x_2^2 x_3 + x_1 x_3^3 - x_1 x_3 x_4^2,  -x_1^3 x_3 - x_1 x_2^2 x_3 + x_1 x_3^3 - x_1 x_3 x_4^2, \\
\end{gather*}
\begin{gather*}
-x_1^2 x_2 x_3 + x_2^3 x_3 - x_2 x_3^3 + x_2 x_3 x_4^2, x_1^2 x_2 x_3 + x_2^3 x_3 - x_2 x_3^3 - x_2 x_3 x_4^2,\\
x_1^2 x_2 x_4 - x_2^3 x_4 - x_2 x_3^2 x_4 + x_2 x_4^3, x_1^2 x_2 x_4 - x_2^3 x_4 + x_2 x_3^2 x_4 + x_2 x_4^3, \\ 
x_1^2 x_3 x_4 - x_2^2 x_3 x_4 - x_3^3 x_4 + x_3 x_4^3, -x_1^2 x_3 x_4 - x_2^2 x_3 x_4 + x_3^3 x_4 - x_3 x_4^3. 
\end{gather*}
By direct calculation and Lemma \ref{lem-E1}, we have
$$
E_B = X_M = \{(0\mathpunct{:} 1\mathpunct{:} \pm 1\mathpunct{:} 0)\} \cup \mathcal{Z}(F_1)\cup \mathcal{Z}(F_2) \subset \mathbb{P}^3
$$ 
where $F_1 = \{x_2, -x_1^2 + x_3^2 - x_4^2\}$, $F_2 =  \{x_3, -x_1^2 + x_2^2 - x_4^2\}$. 
\item[Case 3.] $B = A/(x_1^2 + x_2^2 + x_3^2 + x_4^2)$ where $A$ is defined by
$
\mathcal{Q} = (q_{ij}) \in \mathbb{M}_4(\Bbbk)
$
with $q_{ij} = -1$ for all $i\neq j$. By a direct calculation similar to Case 2, we have
\[
E_B = \left\{ 
\begin{aligned}
	&(1\mathpunct{:}0 \mathpunct{:}0 \mathpunct{:} \pm 1), \ (0 \mathpunct{:} 1\mathpunct{:} 0 \mathpunct{:} \pm 1), \ (0 \mathpunct{:} 0 \mathpunct{:} 1 \mathpunct{:} \pm 1), \\
	&(1\mathpunct{:}0\mathpunct{:}  \pm 1 \mathpunct{:} 0), \ (0 \mathpunct{:}1\mathpunct{:} \pm 1\mathpunct{:} 0), \ (1 \mathpunct{:}\pm 1\mathpunct{:}0\mathpunct{:}0)
\end{aligned}
\right\}.
\]
\end{itemize}

\begin{corollary}
For smooth $\pm 1$-skew quadric surfaces $B = A/(x_1^2 + x_2^2 + x_3^2 +x_4^2)$, $B' = A'/(x_1^2 + x_2^2 + x_3^2 +x_4^2)$, the following are equivalent:
\begin{enumerate}
\item $ \GrM B \cong \GrM B' $.
\item $ \qgr B \cong \qgr B' $.
\item $ \mathrm{D}^{\mathrm{b}}(\qgr B) \cong \mathrm{D}^{\mathrm{b}}(\qgr B') $.
\item $E_B \cong E_{B'}$.
\end{enumerate}
\end{corollary}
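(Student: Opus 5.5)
The Corollary should follow by combining Proposition \ref{thm-ske4} with the computation of $E_B$ in the three cases. Part (3) of Proposition \ref{thm-ske4} already gives (1) $\Leftrightarrow$ (2) $\Leftrightarrow$ (3), so it remains to add (4). I will prove (1) $\Rightarrow$ (4) and (4) $\Rightarrow$ (3).

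For (4) $\Rightarrow$ (3), I argue by contraposition. By Proposition \ref{thm-ske4} (2), $\mathrm{D}^{\mathrm{b}}(\qgr B)$ is equivalent to exactly one of Cases 1, 2, 3. Let $B_1,B_2,B_3$ be the three representative algebras listed there. Using (3), $B$ then satisfies $\GrM B\cong \GrM B_j$ for the corresponding $j$, and likewise $B'$ for some $j'$. If $j\neq j'$, I want $E_B\not\cong E_{B'}$. Granting (1) $\Rightarrow$ (4), we get $E_B\cong E_{B_j}$ and $E_{B'}\cong E_{B_{j'}}$. So it suffices to check that $E_{B_1},E_{B_2},E_{B_3}$ are pairwise non-isomorphic. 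This is read off the explicit descriptions:
\begin{itemize}
\item[(i)] $E_{B_1}$ is a smooth irreducible quadric surface in $\mathbb{P}^3$, which is $2$-dimensional.
\item[(ii)] $E_{B_2}$ is $1$-dimensional: two conics, one in each of the planes $x_2=0$ and $x_3=0$, together with two points.
\item[(iii)] $E_{B_3}$ consists of $12$ points.
\end{itemize}
Since dimension is an isomorphism invariant of varieties, these three are pairwise distinct.

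For (1) $\Rightarrow$ (4), I need that the point variety is an invariant of the graded module category. I would use the standard fact (Zhang twists / \cite{ATV90}) that point modules of $B$ are parametrized by the point scheme $\Gamma_B$. Since $B$ satisfies (G1) by Theorem \ref{thm-main}, Theorem \ref{thm-spa} and Remark \ref{rem-asg}, we have $\Gamma_B\cong E_B$. An equivalence $\GrM B\cong \GrM B'$ for algebras generated in degree one commuting with shifts is a Zhang twist equivalence, and it preserves point modules. More precisely, it takes a point module with Hilbert series $1/(1-t)$ to one with the same series, compatibly with families, and so induces an isomorphism $E_B\cong E_{B'}$.

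The main obstacle is exactly this invariance step: making rigorous that a graded Morita equivalence induces an isomorphism of point varieties rather than just a bijection of points. If that is too delicate, it is enough for the argument above to know that a bijection of points preserving the dimension of families distinguishes the three cases. That weaker statement only requires that the equivalence respects flat families of point modules, which holds for twists because a twist acts on $\mathbb{P}(V^*)$ by a linear automorphism.
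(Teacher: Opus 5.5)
Your proposal is correct and is essentially the argument the paper intends (the paper gives no written proof): (1)$\Leftrightarrow$(2)$\Leftrightarrow$(3) is Proposition \ref{thm-ske4}(3), and (4) separates the three cases of Proposition \ref{thm-ske4}(2) because the computed point varieties have dimensions $2$, $1$ and $0$ (a smooth quadric, two conics plus two points, and $12$ points). The paper leaves the invariance step (1)$\Rightarrow$(4) implicit, and your Zhang-twist remark settles it without your fallback about families: for $B'\cong B^\tau$ one has $R_{B^\tau}=(1\otimes\tau_1)^{-1}(R_B)$, so $\mathcal{V}(R_{B^\tau})=(\id\times\phi)(\mathcal{V}(R_B))$ for a linear automorphism $\phi$ of $\mathbb{P}(V^*)$, and hence $E_{B'}\cong E_B$ directly from the (G1) description.
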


\end{document}